\definecolor{pennblue}{cmyk}{1,0.65,0,0.30}
\definecolor{pennred}{cmyk}{0,1,0.65,0.34}
\definecolor{mygreen}{rgb}{0.10,0.50,0.10}
\newcommand{\includesvg}[2][scale=1]{\includegraphics[#1]{#2.pdf}}
\algrenewcommand\algorithmicdo{}
\newtheorem{theorem}{Theorem}
\newtheorem*{theorem*}{Theorem}
\newtheorem{proposition}{Proposition}
\newtheorem{corollary}{Corollary}
\newtheorem{lemma}{Lemma}
\theoremstyle{definition}
\newtheorem{definition}{Definition}
\newtheorem*{problem*}{Problem}
\newtheorem{remark}{Remark}
\newtheorem*{remark*}{Remark}
\newtheoremstyle{assume}
  {3pt}
  {3pt}
  {}
  {}
  {\bf}
  {}
  { }
  {\thmname{#1}.\thmnumber{#2}\thmnote{ \textnormal{(\textit{#3})}}}
\theoremstyle{assume}
\DeclareMathOperator{\E}{\mathbb{E}}
\DeclareMathOperator{\trace}{Tr}
\DeclareMathOperator*{\argmin}{argmin}
\DeclareMathOperator*{\minimize}{minimize}
\DeclareMathOperator{\lmin}{\lambda_\textup{min}}
\DeclareMathOperator{\lmax}{\lambda_\textup{max}}
\DeclareMathOperator{\blkdiag}{blkdiag}
\newcommand{\vect}[2]{\ensuremath{\left[\begin{array}{#1} #2 \end{array}\right]}}
\newcommand{\norm}[1]{\ensuremath{\left\| #1 \right\|}}
\newcommand{\abs}[1]{\ensuremath{{\left\vert #1 \right\vert}}}
\newcommand{\kron}{\ensuremath{\otimes}}
\def\nnil{\nil}
\newcounter{prob}
\newenvironment{prob}[1][\nil]{%
	\def\tmp{#1}
	\equation
	\ifx\tmp\nnil
		\refstepcounter{prob}
		\tag{P\Roman{prob}}
	\else
		\tag{\tmp}
	\fi
	\aligned%
}{%
	\endaligned\endequation%
}
\newcommand{\calA}{\ensuremath{\mathcal{A}}}
\newcommand{\calB}{\ensuremath{\mathcal{B}}}
\newcommand{\calD}{\ensuremath{\mathcal{D}}}
\newcommand{\calE}{\ensuremath{\mathcal{E}}}
\newcommand{\calG}{\ensuremath{\mathcal{G}}}
\newcommand{\calN}{\ensuremath{\mathcal{N}}}
\newcommand{\calO}{\ensuremath{\mathcal{O}}}
\newcommand{\calS}{\ensuremath{\mathcal{S}}}
\newcommand{\calT}{\ensuremath{\mathcal{T}}}
\newcommand{\calV}{\ensuremath{\mathcal{V}}}
\newcommand{\calX}{\ensuremath{\mathcal{X}}}
\newcommand{\calY}{\ensuremath{\mathcal{Y}}}
\newcommand{\bA}{\ensuremath{\bm{A}}}
\newcommand{\bB}{\ensuremath{\bm{B}}}
\newcommand{\bC}{\ensuremath{\bm{C}}}
\newcommand{\bD}{\ensuremath{\bm{D}}}
\newcommand{\bE}{\ensuremath{\bm{E}}}
\newcommand{\bF}{\ensuremath{\bm{F}}}
\newcommand{\bH}{\ensuremath{\bm{H}}}
\newcommand{\bI}{\ensuremath{\bm{I}}}
\newcommand{\bK}{\ensuremath{\bm{K}}}
\newcommand{\bL}{\ensuremath{\bm{L}}}
\newcommand{\bM}{\ensuremath{\bm{M}}}
\newcommand{\bO}{\ensuremath{\bm{O}}}
\newcommand{\bP}{\ensuremath{\bm{P}}}
\newcommand{\bQ}{\ensuremath{\bm{Q}}}
\newcommand{\bR}{\ensuremath{\bm{R}}}
\newcommand{\bV}{\ensuremath{\bm{V}}}
\newcommand{\bX}{\ensuremath{\bm{X}}}
\newcommand{\bY}{\ensuremath{\bm{Y}}}
\newcommand{\bZ}{\ensuremath{\bm{Z}}}
\newcommand{\bb}{\ensuremath{\bm{b}}}
\newcommand{\be}{\ensuremath{\bm{e}}}
\newcommand{\bq}{\ensuremath{\bm{q}}}
\newcommand{\bu}{\ensuremath{\bm{u}}}
\newcommand{\bv}{\ensuremath{\bm{v}}}
\newcommand{\bw}{\ensuremath{\bm{w}}}
\newcommand{\bx}{\ensuremath{\bm{x}}}
\newcommand{\by}{\ensuremath{\bm{y}}}
\newcommand{\bz}{\ensuremath{\bm{z}}}
\newcommand{\bPi}{\ensuremath{\bm{\Pi}}}
\newcommand{\bPhi}{\ensuremath{\bm{\Phi}}}
\newcommand{\setN}{\ensuremath{\mathbb{N}}}
\newcommand{\setR}{\ensuremath{\mathbb{R}}}
\newcommand{\setS}{\ensuremath{\mathbb{S}}}
\def\st/{\textsuperscript{st}}
\def\nd/{\textsuperscript{nd}}
\def\rd/{\textsuperscript{rd}}
\def\th/{\textsuperscript{th}}
\newcommand{\del}{\ensuremath{\partial}}
\newcommand{\bxh}{\ensuremath{\bm{{\hat{x}}}}}
\newcommand{\bxc}{\ensuremath{\bm{{\check{x}}}}}
\newcommand{\bxt}{\ensuremath{\bm{{\tilde{x}}}}}
\newcommand{\bxb}{\ensuremath{\bm{{\bar{x}}}}}
\newcommand{\byb}{\ensuremath{\bm{{\bar{y}}}}}
\newcommand{\bvb}{\ensuremath{\bm{{\bar{v}}}}}
\newcommand{\bzb}{\ensuremath{\bm{{\bar{z}}}}}
\newcommand{\bMt}{\ensuremath{\bm{{\tilde{M}}}}}
\newcommand{\bXd}{\ensuremath{\bm{{\dot{X}}}}}
\title{Approximate Supermodularity of\\Kalman Filter Sensor Selection}
\author{Luiz~F.~O.~Chamon, George J. Pappas, and Alejandro~Ribeiro%
\thanks{Department of Electrical and Systems Engineering, University of Pennsylvania.
 e-mail: \mbox{\texttt{luizf@seas.upenn.edu}}, \mbox{\texttt{\{luizf, pappasg, aribeiro\}@seas.upenn.edu}}.
This work was supported by ARL DCIST CRA W911NF-17-2-0181.
Part of the results in this paper appeared in~\cite{Chamon17m}.}%
}
\begin{document}
\maketitle
\begin{abstract}

This work considers the problem of selecting sensors in a large scale system to minimize the error in estimating its states. More specifically, the state estimation mean-square error~(MSE) and worst-case error for Kalman filtering and smoothing. Such selection problems are in general NP-hard, i.e., their solution can only be approximated in practice even for moderately large problems. Due to its low complexity and iterative nature, greedy algorithms are often used to obtain these approximations by selecting one sensor at a time choosing at each step the one that minimizes the estimation performance metric. When this metric is supermodular, this solution is guaranteed to be~$(1-1/e)$-optimal. This is however not the case for the MSE or the worst-case error. This issue is often circumvented by using supermodular surrogates, such as the~$\log\det$, despite the fact that minimizing the~$\log\det$ is not equivalent to minimizing the MSE. Here, this issue is addressed by leveraging the concept of approximate supermodularity to derive near-optimality certificates for greedily minimizing the estimation mean-square and worst-case error. In typical application scenarios, these certificates approach the $(1-1/e)$ guarantee obtained for supermodular functions, thus demonstrating that no change to the original problem is needed to obtain guaranteed good performance.

\end{abstract}

\section{INTRODUCTION}
\label{S:intro}

Often, the outputs of a dynamical system can only be partially observed due to a limited sensing budget and/or because the system scale makes instrumenting all outputs impractical. This issue is critical for distributed systems, where power and communication constraints further limit the number of sensors available~\cite{Mo11s, Rowaihy07s}, and is found in applications such as target tracking, field monitoring, power allocation, and biological systems analysis~\cite{Zhao02i, Zhang09d, Hero11s, Chen12o, Liu12o, Roy16s}. In these cases, we must choose which outputs to observe in order to best estimate the system internal states.

In general, solving this constrained output selection problem, often known as \emph{sensor selection}, is NP-hard~\cite{Krause08n, Das11s, Ranieri14n, Tzoumas16m, Olshevsky17o, Zhang17s, Ye18c}. Still, there exist instances for which its solution can be approximated. For instance, when the goal is to find the smallest set of outputs that makes the system observable, greedy sensor selection is near-optimal~\cite{Summers16s}. When only structural descriptions are available, the problem is also known to be NP-hard, although a variant in which the output sensing can be designed freely admits a polynomial time solution~\cite{Commault13i, Pequito15c, Pequito16f}. Nevertheless, observability alone may not guarantee good state estimates because it does not take into account how hard each system mode is to observe. To do so, we turn to measures of the state estimation error.

In this setting, natural choices of objectives, especially in Kalman filtering applications, are the state estimation mean-square error~(MSE) and the worst-case estimation error. However, approximation guarantees for these problems are scarce. In particular, typical guarantees for greedy search do not apply since these functions are not supermodular, a diminishing returns property used to obtain these results---see~\cite{Tzoumas16m, Olshevsky17o, Zhang17s, Ye18c} for counter-examples in the context of control theory. Moreover, though convex relaxations of these problems have been proposed, they too lack near-optimality certificates~\cite{Weimer08r, Joshi09s, Moshtagh09o, Wang13c, Dhingra14a, Liu16s, Zhang17s}. In fact, system-independent guarantees cannot be provided for optimizing the state estimation MSE unless~$\text{P} = \text{NP}$~\cite{Ye18c}.

A typical way to address this issue is to use surrogate supermodular objectives, such as the log-determinant of the error covariance matrix or some information-theoretic measure relating the selected output set and its complement~\cite{Krause08n, Moshtagh09o, Shamaiah10g, Ranieri14n, Tzoumas16s, Summers16s}. Though effective, there is no direct relation between the estimation MSE or the worst-case estimation error and these cost functions. As a matter of fact, despite its usefulness in minimizing the volume of the confidence ellipsoid, the log-determinant is a poor proxy for the MSE---see Remark~\ref{R:logdet}.

In this work, we take a page out of Tukey’s book and try to solve the ``right problem''~\cite{Box05s}, i.e., \emph{selecting sensors so as to directly minimize the estimation MSE or the worst-case estimation error}. To make this selection scalable and obtain performance guarantees, we prove that greedy sensor selection is near-optimal for minimizing the estimation MSE and the worst-case estimation error in both filtering and smoothing settings. Our results show that when the measurement noise is large compared to the process noise, a scenario of practical relevance in Kalman filtering, greedy solutions to these problems approach the classical~$(1-1/e)$-optimal certificate for supermodular objectives~\cite{Nemhauser78a}.

To derive these guarantees, we leverage two notions of \emph{approximate supermodularity} in which limited violations of the diminishing return property are allowed~(Section~\ref{S:approxSM}). In the case of~$\alpha$-supermodular functions, violations are allowed up to a multiplicative factor~$\alpha$, whereas~$\epsilon$-supermodular functions violate supermodularity up to an additive constant~$\epsilon$. We prove that these functions can be minimized near-optimally using greedy search and that~$\alpha$ and~$\epsilon$ not only quantify how much supermodularity is violated, but also the loss in performance guarantee due to these violations~(Theorems~\ref{T:alphaGreedy} and~\ref{T:epsilonGreedy}). We proceed to show that the estimation MSE is~$\alpha$-supermodular and that the worst-case estimation error---i.e., the spectral norm of the error covariance matrix---is~$\epsilon$-supermodular~(Section~\ref{S:approxScalarizations}). Combining these results yields performance guarantees for greedy sensor selection using the MSE and the worst-case error in filtering~(Theorem~\ref{T:nearOptimalFiltering}) and smoothing~(Theorem~\ref{T:nearOptimalSmoothing}) state estimation problems~(Section~\ref{S:nearOptimalSS}). Note that the~$\alpha$-supermodularity and corresponding guarantees for the MSE in filtering and smoothing problems were previously established in~\cite{Chamon17m}. A field monitoring example is provided to illustrate these results~(Section~\ref{S:Sims}).

\textbf{Notation}: Lowercase boldface letters represent vectors~($\bx$), uppercase boldface letters are matrices~($\bX$), and calligraphic letters denote sets~($\calX$). We write~$\abs{\calX}$ for the cardinality of~$\calX$ and let~$\emptyset$ denote the empty set. To say~$\bX$ is a positive semi-definite~(PSD) matrix we write~$\bX \succeq 0$, so that for~$\bX,\bY \in \setR^{n \times n}$, $\bX \preceq \bY \Leftrightarrow \bb^T \bX \bb \leq \bb^T \bY \bb$ for all $\bb \in \setR^n$. Similarly, we write~$\bX \succ 0$ when~$\bX$ is positive definite. For a square matrix~$\bX$, we denote its spectral norm~($\ell_2$ induced norm) by~$\norm{\bX}$ and its maximum and minimum eigenvalues by~$\lmax(\bX)$ and~$\lmin(\bX)$ respectively.

\section{PROBLEM FORMULATION}
\label{S:problem}

Consider an autonomous dynamical system whose internal states~$\bx_k \in \setR^n$ evolve as
\begin{equation}\label{E:dynamics}
	\bx_{k+1} = \bF \bx_k + \bw_k
\end{equation}
and a set of sensors~$\calO$ such that the output~$\by_{u,k} \in \setR^{p_u}$ of sensor~$u \in \calO$ is given by
\begin{equation}\label{E:outputs}
	\by_{u,k} = \bH_u \bx_k + \bv_{u,k}
		\text{,}
\end{equation}
where~$\bF \in \setR^{n \times n}$ is the state transition matrix and~$\bH_u \in \setR^{p_u \times n}$ are the sensor output matrices. The process noise~$\bw_k$ and measurement noises~$\bv_{u,k}$ are zero-mean Gaussian random vectors with arbitrary covariances~$\E \bw_k \bw_k^T = \bR_w \succ 0$ and~$\E \bv_{u,k} \bv_{u,k}^T = \bR_{v,u} \succ 0$ for all~$k$. We assume~$\{\bv_{u_1,i},\bv_{u_2,j},\bw_i,\bw_j\}$ are independent for all~$i \neq j$ and~$u_1,u_2 \in \calO$. The initial states~$\bx_0 \sim \calN(\bar{\bx}_0,\bPi_0)$ are assumed to be a Gaussian random vector with mean~$\bar{\bx}_0$ and covariance~$\bPi_0 \succ 0$.

We seek to estimate the states~$\bx_k$ from the observations~$\{\by_{u,i}\}_{u \in \calX}$ for a sensing set~$\calX \subseteq \calO$ and~$i \leq k$. Depending on the observation history used to estimate the states at time~$k$, we define the following estimation problems:

\begin{enumerate}[(i)]

\item \textbf{Filtering}, where the current states are estimated using past observations, i.e., we seek~$\bxh_k(\calX) = \E \left[ \bx_k \mid \{ \by_{u,i} \}_{u \in \calX,\, i \leq k} \right]$.

\item \textbf{Smoothing}, where all states up to time~$k$ are estimated based on all observations. Formally, we seek~$\bxt_k(\calX) = \E \left[ \bxb_k \mid \{ \by_{u,i} \}_{u \in \calX,\, i \leq k} \right]$, where~$\bxb_k = \vect{cccc}{\bx_{0}^T & \bx_1^T & \cdots & \bx_{k}^T}^T$. As opposed to~(i), states are estimated based on both past and future observations.

\end{enumerate}

Note that we do not consider the problem of \emph{prediction} in which the states at time~$k$ is estimated based on observations up to~$j < k$, i.e., in which we evaluate~$\bxc_k(\calX) = \E \left[ \bx_k \mid \{ \by_{u,i} \}_{u \in \calX,\, i \leq j < k} \right]$. Although filtering and prediction are different estimation problems, they do not differ from the viewpoint of sensor selection. Indeed, prediction only extrapolate the $j$-th filtered estimate using~$\bxc_k = \bF^{k-j} \bxh_j$. Since it does not use additional measurements after the~$j$-th iteration, the sensor selection problem for prediction is in fact the same as the one for filtering.

The performance of the estimation problems~(i) and~(ii) for a given sensing set~$\calX \subseteq \calO$ is fully characterized by the error covariance matrix of the estimators~$\bxh$ and~$\bxt$. Explicitly, 
\begin{subequations}\label{E:covPQ}
\begin{align}
	\bP_k(\calX) &= \E \left[ \bxh_k(\calX) - \bx_k \right] \left[ \bxh_k(\calX) - \bx_k \right]^T
		\label{E:covP}
	\\
	\bQ_k(\calX) &= \E \left[ \bxt_k(\calX) - \bxb_k \right] \left[ \bxt_k(\calX) - \bxb_k \right]^T
		\label{E:covQ}
\end{align}
\end{subequations}
Observe that since the estimators depend on the sensing set~$\calX$, so do their error covariance matrices. In the propositions below, we show that the matrices in~\eqref{E:covPQ} can be written in a unified manner as a set function~$\bY_k: 2^\calO \to \setS_+$ of the form
\begin{equation}\label{E:errorCovariance}
	\bY_k(\calX) = \left( \bM_{\emptyset,k} + \sum_{u \in \calX} \bM_{u,k} \right)^{-1}
		\text{.}
\end{equation}
where~$\bM_{\emptyset,k} \succ 0$ is the \emph{a priori} error covariance matrix that represents the error in the absence of sensor measurements and~$\bM_{u,k} \succeq 0$ carries the information obtained by using sensor~$u \in \calO$ at time~$k$.

\begin{proposition}\label{T:filtering}

The error covariance matrix~$\bP_k(\calX)$ of the filtered estimator~$\bxh$ from~(i) can be written recursively using~\eqref{E:errorCovariance} with
\begin{equation}\label{E:filteringM}
	\bM_{\emptyset,k} = \bP_{k \vert k-1}^{-1}
	\text{ and }
	\bM_{u,k} = \bV_u
		\text{,}
\end{equation}
where~$\bP_{k \vert k-1} = \bF \bP_{k-1} \bF^T + \bR_w$ is the prediction error covariance matrix, $\bP_{k-1}$ is the estimation error covariance matrix at time~$k-1$, $\bP_{0 \vert -1} = \bPi_0$, and~$\bV_u = \bH_u^T \bR_{v,u}^{-1} \bH_u$.

\end{proposition}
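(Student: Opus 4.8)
The plan is to recognize \eqref{E:errorCovariance} with the choice \eqref{E:filteringM} as nothing but the \emph{information (inverse-covariance) form} of the Kalman filter measurement update, and to derive it by induction on $k$ using the matrix inversion lemma. First I would argue that, by joint Gaussianity of the states and observations together with the independence hypotheses on $\{\bw_k\}$ and $\{\bv_{u,k}\}$, the MMSE estimator $\bxh_k(\calX)$ coincides with the Kalman filter estimate for the ``stacked-sensor'' model obtained by concatenating the outputs of the sensors in $\calX$: let $\bH_\calX$ be the vertical stacking of the $\bH_u$, $u \in \calX$, and let $\bR_{v,\calX} = \blkdiag\left(\bR_{v,u}\right)_{u \in \calX}$, which is block diagonal precisely because the measurement noises are mutually independent across sensors. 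The aggregated measurement then has output matrix $\bH_\calX$ and noise covariance $\bR_{v,\calX} \succ 0$.

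Next I would run the induction. For the prediction step, the standard covariance propagation gives $\bP_{k \vert k-1} = \bF \bP_{k-1}(\calX) \bF^T + \bR_w$, with base case $\bP_{0 \vert -1} = \bPi_0 \succ 0$; since $\bR_w \succ 0$, a one-line induction shows $\bP_{k \vert k-1} \succ 0$, so its inverse exists. For the update step, starting from the Kalman-gain form of the a posteriori covariance,
\[
  \bP_k(\calX) = \bP_{k \vert k-1} - \bP_{k \vert k-1}\bH_\calX^T\left(\bH_\calX \bP_{k \vert k-1}\bH_\calX^T + \bR_{v,\calX}\right)^{-1}\bH_\calX \bP_{k \vert k-1},
\]
the matrix inversion lemma (Woodbury identity), applicable since $\bP_{k \vert k-1} \succ 0$ and $\bR_{v,\calX} \succ 0$, yields $\bP_k(\calX)^{-1} = \bP_{k \vert k-1}^{-1} + \bH_\calX^T\bR_{v,\calX}^{-1}\bH_\calX$. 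Because $\bR_{v,\calX}$ is block diagonal, $\bH_\calX^T\bR_{v,\calX}^{-1}\bH_\calX = \sum_{u \in \calX}\bH_u^T\bR_{v,u}^{-1}\bH_u = \sum_{u \in \calX}\bV_u$, so $\bP_k(\calX)^{-1} = \bP_{k \vert k-1}^{-1} + \sum_{u \in \calX}\bV_u$, which is exactly \eqref{E:errorCovariance} with $\bM_{\emptyset,k} = \bP_{k \vert k-1}^{-1}$ and $\bM_{u,k} = \bV_u$. The empty-set case $\bP_k(\emptyset) = \bP_{k \vert k-1}$ is recovered automatically, and the required sign conditions hold since $\bP_{k \vert k-1}^{-1} \succ 0$ and $\bV_u = \bH_u^T \bR_{v,u}^{-1} \bH_u \succeq 0$.

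The Kalman recursions themselves are routine; the one point that needs care is the first paragraph, namely justifying that selecting a subset of sensors and then conditioning is equivalent to running a Kalman filter on the aggregated measurement model, and in particular that the block-diagonal structure of $\bR_{v,\calX}$---hence the additive decomposition $\sum_{u \in \calX}\bV_u$---is a genuine consequence of the cross-sensor independence assumption rather than an extra hypothesis. Everything else follows from Gaussian conditioning and the Woodbury identity.
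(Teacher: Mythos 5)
Your proposal is correct and follows essentially the same route as the paper, which also reduces the claim to the standard Kalman filter recursion and states the information-form update $\bP_k(\calX)^{-1} = \bP_{k\vert k-1}^{-1} + \sum_{u\in\calX}\bH_u^T\bR_{v,u}^{-1}\bH_u$ (citing standard references rather than spelling out the Woodbury step). Your explicit stacking of the sensors and the observation that cross-sensor independence makes $\bR_{v,\calX}$ block diagonal, hence the additive decomposition, is exactly the content the paper leaves implicit.
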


\begin{proof}
This result is obtained directly from the Kalman filter~(KF) recursion~(see, e.g., \cite{Kailath00l}). For ease of reference, we provide a derivation in the appendix.
\end{proof}

\begin{proposition}\label{T:smoothing}

The error covariance matrix~$\bQ_k(\calX)$ of the smoothed estimator~$\bxt$ from~(ii) can be written as in~\eqref{E:errorCovariance} with
\begin{equation}\label{E:smoothingM}
\begin{aligned}
	\bM_{\emptyset,k} &= \blkdiag(\bPi_0,\bI \kron \bR_w)^{-1}
	\\
	\bM_{u,k} &= \bPhi_k^T \left( \bI \kron \bV_u \right) \bPhi_k
\end{aligned}
\end{equation}
where~$\bV_u = \bH_u^T \bR_{v,u}^{-1} \bH_u$, $\blkdiag(\bX,\bY)$ is the block diagonal matrix whose diagonal blocks are~$\bX$ and~$\bY$, and
\begin{equation}\label{E:phik}
	\bPhi_k =
	\begin{bmatrix}
		\bI    &&&
		\\
		\bF    & \bI       &&
		\\
		\vdots & \vdots    & \ddots &
		\\
		\bF^k  & \bF^{k-1} & \cdots & \bI
	\end{bmatrix}
		\text{.}
\end{equation}

\end{proposition}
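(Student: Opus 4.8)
The plan is to recast fixed-interval smoothing as a single batch linear--Gaussian estimation problem in which the unknown is the \emph{disturbance vector}---the initial state stacked with the process-noise sequence---and then to read off the corresponding error covariance from the standard information (inverse-covariance) form. The matrix $\bPhi_k$ of~\eqref{E:phik} enters the final expression only because it is the linear map carrying that disturbance vector to the stacked states, so the first step is to make this map explicit. Iterating~\eqref{E:dynamics} gives $\bx_j = \bF^j \bx_0 + \sum_{i=0}^{j-1}\bF^{\,j-1-i}\bw_i$ for $0\le j\le k$; stacking these identities over $j$ shows $\bxb_k = \bPhi_k\bzb$ with $\bzb := \vect{cccc}{\bx_0^T & \bw_0^T & \cdots & \bw_{k-1}^T}^T$ and $\bPhi_k$ exactly as in~\eqref{E:phik}. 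Since $\bPhi_k$ is lower block-triangular with identity diagonal blocks, it is invertible; and by the independence and Gaussianity hypotheses on $\bx_0$ and $\{\bw_k\}$, the vector $\bzb$ is Gaussian with covariance $\blkdiag(\bPi_0,\bI\kron\bR_w)$, i.e. with information matrix $\bM_{\emptyset,k}$.

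Next I would stack the measurements. For each $u\in\calX$, collecting~\eqref{E:outputs} over $i=0,\dots,k$ yields $\byb_{u,k} := \vect{ccc}{\by_{u,0}^T & \cdots & \by_{u,k}^T}^T = (\bI\kron\bH_u)\bxb_k + \bvb_{u,k} = (\bI\kron\bH_u)\bPhi_k\,\bzb + \bvb_{u,k}$, where $\bvb_{u,k}$ is zero-mean Gaussian with covariance $\bI\kron\bR_{v,u}$ and, by the independence assumptions, is independent of $\bzb$ and across distinct $u$. Thus $\{\byb_{u,k}\}_{u\in\calX}$ is a linear-Gaussian observation of $\bzb$ under a Gaussian prior, and since conditional expectation commutes with the fixed linear map $\bPhi_k$ we have $\bxt_k(\calX) = \bPhi_k\,\E\big[\bzb \mid \{\byb_{u,k}\}_{u\in\calX}\big]$.

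Because estimation of $\bzb$ and of $\bxb_k = \bPhi_k\bzb$ differ only by the fixed invertible map $\bPhi_k$, the error covariance in the disturbance coordinates is, by the standard Bayesian (Gauss--Markov) linear estimation identity,
\[
\Big(\bM_{\emptyset,k} + \textstyle\sum_{u\in\calX}\big[(\bI\kron\bH_u)\bPhi_k\big]^{T}(\bI\kron\bR_{v,u})^{-1}\big[(\bI\kron\bH_u)\bPhi_k\big]\Big)^{-1}.
\]
Applying the mixed-product rule $(\bA\kron\bB)(\bC\kron\bD)=\bA\bC\kron\bB\bD$ together with $(\bI\kron\bH_u)^T=\bI\kron\bH_u^T$, each summand collapses to $\bPhi_k^T\big(\bI\kron(\bH_u^T\bR_{v,u}^{-1}\bH_u)\big)\bPhi_k = \bPhi_k^T(\bI\kron\bV_u)\bPhi_k = \bM_{u,k}$, which is precisely~\eqref{E:errorCovariance} with the stated $\bM_{\emptyset,k}$ and $\bM_{u,k}$.

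The computation is otherwise the routine batch Kalman-smoother derivation, so I do not expect a genuine obstacle; the one place that needs care is the bookkeeping in the stacking steps---fixing the block ordering of $\bzb$, keeping track of which Kronecker factor carries the time index, and recognizing that the clean sensor-additive form~\eqref{E:errorCovariance} appears in the disturbance coordinates $\bzb$ (whose prior is block diagonal, producing the simple $\bM_{\emptyset,k}$) rather than directly in the raw state coordinates, the passage between the two being mediated by the invertible $\bPhi_k$. I would present the detailed derivation in the appendix, as is done for Proposition~\ref{T:filtering}.
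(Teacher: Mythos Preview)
Your proposal is correct and follows essentially the same route as the paper's proof: reparameterize the stacked states via the disturbance vector~$\bzb=[\bx_0^T,\bw_0^T,\dots,\bw_{k-1}^T]^T$ so that~$\bxb_k=\bPhi_k\bzb$, lift the measurements to the batch model~$\byb_{u,k}=(\bI\kron\bH_u)\bPhi_k\bzb+\bvb_{u,k}$, apply the standard linear--Gaussian information form, and simplify each sensor term with the Kronecker mixed-product identity. You are in fact slightly more explicit than the paper about the role of~$\bPhi_k$'s invertibility and about the fact that the clean additive form~\eqref{E:errorCovariance} lives in the~$\bzb$ coordinates rather than directly in~$\bxb_k$; the paper's appendix derives exactly the~$\bzb$-coordinate covariance~$\bK(\calX)$ and identifies it with~\eqref{E:smoothingM} without dwelling on that distinction.
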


\begin{proof}
The proof follows classical stochastic least-squares results~(see, e.g., \cite{Kailath00l}). For ease of reference, we provide a derivation in the appendix.
\end{proof}

Propositions~\ref{T:filtering} and~\ref{T:smoothing} show how the abstract set function~\eqref{E:errorCovariance} applies to both filtering and smoothing estimation problems. In the sequel, we formulate the sensor selection problem and derive theoretical guarantees directly in terms of the generic~\eqref{E:errorCovariance}. Particular results for the estimation problems~(i) and~(ii) are provided in Section~\ref{S:nearOptimalSS}.

\subsection{Kalman filter sensor selection}
\label{S:KFSS}

Our goal is to study sensor selection problems in which we seek a sensing set~$\calX \subseteq \calO$ constrained by a budget~$\abs{\calX} \leq s$ so that the estimation error covariance matrix~$\bY_k(\calX)$ in~\eqref{E:errorCovariance} is minimized in some sense over a given time horizon~$N$. Formally:

\begin{problem*}[Kalman filter sensor selection]

Take~$N \in \setN$ and let~$\theta_k \geq 0$ for~$k \in [0,N-1]$ be a set of nonnegative scalar weights and~$h: \setS_+ \to \setR$ be a real-valued spectral function over the PSD cone. Kalman filter sensor selection can be written as the discrete optimization problem
\begin{prob}[$\text{P}^\star$]\label{P:sensorSelection}
	\minimize_{\calX \subseteq \calO,\, \abs{\calX} \leq s}& 
		&&\sum_{k = 0}^{N-1} \theta_k\, h\Big[ \bY_{m+k}(\calX) \Big]
			- C_\emptyset
		\text{,}
\end{prob}
where~$C_\emptyset \triangleq \sum_{k = 0}^{N-1} \theta_k h[ \bM_{\emptyset,m+k}^{-1} ]$ is a constant.

\end{problem*}

\noindent It is worth noting that the constant~$C_\emptyset$ is included only so that the objective of~\eqref{P:sensorSelection} vanishes when no sensor is selected---since~$\bY_{k}(\emptyset) = \bM_{\emptyset,k}^{-1}$, taking~$\calX = \emptyset$ in~\eqref{P:sensorSelection} yields~$\sum_{k = 0}^{N-1} \theta_k\,h[ \bY_{m+k}(\emptyset) ] - C_\emptyset = 0$. This normalization has no effect on the optimization problem, but simplifies the statement of the approximation certificates presented in the following sections. Also observe that Propositions~\ref{T:filtering} and~\ref{T:smoothing} do not take control inputs into account. Hence, though a sensing set obtained from~\eqref{P:sensorSelection} can be used to perform state estimation in the presence of control inputs, it need not be optimal for the joint sensing set-controller design problem. This formulation is left for future works.

Different choices of~$N$, $\theta_k$, and~$h$ in~\eqref{P:sensorSelection} yield different sensor selection problems. For instance, \eqref{P:sensorSelection} becomes a myopic sensor selection problem when~$N = 1$~\cite{Shamaiah10g}. When~$N > 1$, we can choose to optimize the \emph{final estimation error}---corresponding to~$\theta_k = 0$ for all~$k < N-1$ and~$\theta_{N-1} = 1$---, \emph{the average error}---by making~$\theta_k = 1$ for all~$k$---, or some \emph{weighted average error}---e.g., a geometric discount using~$\theta_k = \rho^{N-1-k}$ for~$\rho < 1$. Observe that since the smoothing problem~(ii) estimates all states up to~$k$, these multi-step versions of~\eqref{P:sensorSelection} may not be useful in practice. We still consider them for the sake of symmetry.

The scalarization~$h$ is necessary because matrix minimization in the PSD cone is not well-posed~\cite{Boyd04c} and its choice affects both the complexity and the practical usefulness of~\eqref{P:sensorSelection}. For instance, \eqref{P:sensorSelection} with~$h(\bX) = -\trace(\bX^{-1})$ is combinatorial but modular and can therefore be solved in linear time~\cite{Krause14s, Bach14l}. This objective, however, is of little relevance in practice inasmuch as it has no direct relation to the estimation MSE. In general, three scalarizations are widely agreed to be of practical significance:

\begin{enumerate}[i.]

\item The \textbf{trace} defined as
\begin{prob}\label{P:trace}
	\calT^\star \in \argmin_{\calT \subseteq \calO,\,\abs{\calT} \leq s}
		f_T(\calT) \triangleq \sum_{k = 0}^{N-1} \theta_k \trace\Big[ \bY_{m+k}(\calT) \Big]
			- C_\emptyset
		\text{,}
\end{prob}
in which case the objective represents the estimation MSE.

\item The \textbf{spectral norm} or the maximum eigenvalue
\begin{prob}\label{P:eig}
	\calS^\star \in \argmin_{\calS \subseteq \calO,\,\abs{\calS} \leq s}
		f_S(\calS) \triangleq \sum_{k = 0}^{N-1} \theta_k \norm{\bY_{m+k}(\calS)}
			- C_\emptyset
		\text{,}
\end{prob}
in which case the objective is a robust version of~\eqref{P:trace}, since it minimizes the worst-case estimation error.

\item The \textbf{log-determinant}
\begin{prob}\label{P:logdet}
	\calD^\star \in \argmin_{\calD \subseteq \calO,\,\abs{\calD} \leq s}
		\sum_{k = 0}^{N-1} \theta_k \log\det\Big[\bY_{m+k}(\calD)\Big]
			- C_\emptyset
		\text{,}
\end{prob}
in which case the objective corresponds to the volume of the confidence ellipsoid~\cite{Joshi09s, Tzoumas16s}.
\end{enumerate}

These formulations are natural because they minimize different estimation error metrics related to the MSE, which is a customary objective in traditional state estimation, i.e., problems that do not consider sensor selection~($\calX = \calO$). Still, problems~\eqref{P:trace}--\eqref{P:logdet} are challenging to solve. They are known to be NP-hard~\cite{Krause08n, Krause14s, Ranieri14n, Tzoumas16m, Olshevsky17o, Zhang17s, Summers16s}, so their solutions can only be approximated in practice, typically by means of greedy algorithms~\cite{Summers16s, Zhang17s}.

In the case of problem~\eqref{P:logdet}, the log-determinant is a monotonically decreasing and supermodular set function~\cite{Shamaiah10g, Tzoumas16s, Summers16s, Moshtagh09o}, i.e., it displays a ``diminishing returns'' property that yields near-optimal performance guarantees for its greedy minimization~\cite{Nemhauser78a}. The MSE and the worst-case estimation error, on the other hand, are not supermodular in general~\cite{Olshevsky17o, Tzoumas16m, Krause14s, Ranieri14n, Zhang17s}. In fact, stringent conditions on the system matrices are needed to make the estimation MSE supermodular~\cite{Das08a, Singh17s}. These facts notwithstanding, greedy sensor selection has been observed to perform well when solving \eqref{P:trace} and \eqref{P:eig} in both control~\cite{Summers16s, Tzoumas16m, Zhang17s} and other contexts~\cite{Avron13f, Chamon17a}.

Our goal in the remainder of this paper is to show that in certain noise regimes of interest these objectives are approximately submodular, so that the greedy solutions of~\eqref{P:trace} and~\eqref{P:eig} come with performance guarantees analogous to those of~\eqref{P:logdet}, thereby reconciling empirical observations with our theoretical understanding of these problems. To obtain these near-optimal certificates, the following section develops a theory of \emph{approximately supermodular} functions, studying operations that preserve approximate supermodularity and showing that these functions can be near-optimally minimized. We then show that the trace and the spectral norm are approximately supermodular scalarizations of~$\bY_k$ so as to finally provide explicit suboptimality bounds for greedy sensor selection for filtering/smoothing.

\begin{remark}\label{R:logdet}
Besides its intrinsic value as a minimizer of the volume of the confidence ellipsoid \cite{Joshi09s, Tzoumas16s}, the~$\log\det$ is often put forward as a supermodular surrogate for the MSE. It is important to point out that this figure of merit is not directly related to the MSE or the worst-case error and that, in general, it is not a good surrogate for either of them. In fact, the log determinant is a good substitute for the MSE only when the problem has some inherent structure that constraints the confidence ellipsoid to be somewhat symmetric. Otherwise, since the volume of an ellipsoid can be reduced by decreasing the length of a single principal axis, using the~$\log\det$ can lead to designs that perform well---in the MSE sense---along a few directions of the parameter space and poorly along all others. Formally, this can be seen by comparing the variation of the log-determinant and trace functions with respect to the eigenvalues of the PSD matrix~$\bY$,
\begin{equation*}
	\frac{\del \log\det(\bY)}{\del \lambda_j(\bY)} = \frac{1}{\lambda_j(\bY)}
	\qquad \text{and} \qquad
	\frac{\del \trace(\bY)}{\del \lambda_j(\bY)} = 1
		\text{.}
\end{equation*}
The gradient of the log-determinant is largest in the direction of the smallest eigenvalue of the error covariance matrix. In contrast, the MSE gives equal weight to all directions of the space. The latter yields balanced designs that are similar to the former only if those are forced to be balanced by the problem structure. Even for random dynamical systems with~$10$~states and~$10$ outputs~(see Section~\ref{S:Sims} for details), the MSE error achieved by a sensing set~($s = 3$) selected by minimizing the~$\log\det$ was up to~$13\%$ larger than a sensing set selected by minimizing the MSE itself~($100$ realizations).
\end{remark}

\section{APPROXIMATELY SUPERMODULAR FUNCTION MINIMIZATION}
	\label{S:approxSM}

\emph{Supermodularity}~(\emph{submodularity}) encodes a ``diminishing returns'' property of certain set functions that implies near-optimality bounds on their greedy minimization~(maximization). Well-known representatives of this class include the rank or~$\log\det$ of a sum of PSD matrices, the Shannon entropy, and the mutual information~\cite{Krause14s, Bach14l}. Still, supermodularity is a stringent condition. In particular, it does not hold for the objectives of~\eqref{P:trace} or~\eqref{P:eig}~\cite{Olshevsky17o, Tzoumas16m, Krause14s, Ranieri14n}.

The purpose of \emph{approximate supermodularity}~(\emph{submodularity}) is to relax the original ``diminishing returns'' property while controlling the magnitude of the violations. The rationale is that if a function is ``almost'' supermodular, then it should behave similar to a supermodular function. In what follows, we formalize and quantify these statements.

Consider a set function~$f: 2^\calO \to \setR$ whose value for an arbitrary set~$\calX \subseteq \calO$ is denoted by~$f(\calX)$. We say~$f$ is \emph{normalized} if~$f(\emptyset) = 0$ and~$f$ is \emph{monotone decreasing} if for all sets~$\calA \subseteq \calB \subseteq \calO$ it holds that~$f(\calA) \geq f(\calB)$. Observe that if a function is normalized and monotone decreasing it must be that~$f(\calX) \leq 0$ for all~$\calX \subseteq \calO$. Define
\begin{equation}\label{E:incrementalGain}
	\Delta_u f(\calX) = f\left( \calX \right)
		- f\left( \calX \cup \{u\} \right)
\end{equation}
to be the variation in the value of~$f$ incurred by adding the element~$u \in \calO \setminus \calX$ to the set~$\calX$. Then, a set function~$f$ is \emph{supermodular} if for all sets~$\calA \subseteq \calB \subseteq \calO$ and elements~$u \in \calO \setminus \calB$ it holds that
\begin{equation}\label{E:supermodularity}
	\Delta_u f(\calA) \geq \Delta_u f(\calB)
		\text{.}
\end{equation}
A function~$f$ is \emph{submodular} if~$-f$ is supermodular.

The relevance of supermodular functions in this work is due to the celebrated bound on the suboptimality of their greedy minimization~\cite{Nemhauser78a}. Specifically, consider the generic cardinality constrained optimization problem
\begin{prob}\label{P:cardConstraint}
	\calX^\star \in \argmin_{\abs{\calX} \leq s} f(\calX)
		\text{,}
\end{prob}
and construct its greedy solution by starting with~$\calG_0 = \emptyset$ and incorporating the elements from~$\calO$ one at a time so as to maximize the gain at each step. Explicitly, at step~$j$ we do
\begin{equation}\label{E:greedy}
\begin{gathered}
	\calG_{j+1} = \calG_{j} \cup \{u\}
		\text{,}
	\\
	\text{with} \quad
	u = \argmin_{w \in \calO \setminus \calG_{j}}
		f\left( \calG_{j} \cup \{w\} \right)
		\text{.}
\end{gathered}
\end{equation}
The recursion in~\eqref{E:greedy} is repeated for~$s$ steps to obtain a greedy solution with~$s$ elements. If~$f$ is monotone decreasing and supermodular~\cite{Nemhauser78a}, then
\begin{align}\label{E:greedyNWF}
	f(\calG_s) \leq (1-e^{-1}) f(\calX^\star)
		\text{.}
\end{align} 

The guarantee in~\eqref{E:greedyNWF}, however, no longer applies when~$f$ is not supermodular. To provide guarantees in these cases, we leverage two measures of approximate supermodularity and derive near-optimality bounds for each of them. It is worth noting that though intuitive, such results are not trivial. In fact, \cite{Horel16m} showed that for another measure of proximity, functions~$\delta$-close to supermodular cannot be optimized in polynomial time unless~$\delta$ is small.

We start with a multiplicative relaxation of the supermodular property~\eqref{E:supermodularity}.

\begin{definition}[$\alpha$-supermodularity]
	\label{D:alphaSM}

A set function $f: 2^\calO \to \setR$ is \emph{$\alpha$-supermodular}, for~$\alpha \in \setR$, if for all sets~$\calA \subseteq \calB \subseteq \calO$ and all~$u \in \calO \setminus \calB$ it holds that
\begin{equation}\label{E:alphaSM}
	\Delta_u f(\calA) \geq \alpha \, \Delta_u f(\calB)
		\text{.}
\end{equation}
\end{definition}

For~$\alpha \geq 1$, \eqref{E:alphaSM} reduces the original definition of supermodularity~\eqref{E:supermodularity}, in which case we refer to the function simply as supermodular~\cite{Bach14l, Krause14s}. On the other hand, when~$\alpha < 1$, $f$ is said to be \emph{approximately supermodular}. Notice that if~$f$ is decreasing, then~\eqref{E:alphaSM} always holds for~$\alpha = 0$. We are therefore interested in the largest~$\alpha$ for which~\eqref{E:alphaSM} holds, i.e.,
\begin{equation}\label{E:alpha}
	\alpha =
	\min_{\substack{\calA \subset \calB \subseteq \calO \\
	u \in \calO \setminus \calB}}\ 
		\frac{\Delta_u f(\calA)}{\Delta_u f(\calB)}
\end{equation}

This concept first appeared in the context of auction design~\cite{Lehmann06c}, although it has been rediscovered in the context of discrete optimization, estimation, and control~\cite{Sviridenko14o, Chamon16n, Chamon17m, Bian17g}. It is worth noting that~$\alpha$ in Definition~\ref{D:alphaSM} is also related to the \emph{submodularity ratio}~$\gamma$ introduced in~\cite{Das11s}. However, the proposed bounds on~$\gamma$ depended on the sparse eigenvalues of a matrix, that are NP-hard to compute. The first explicit~(P-computable) bounds on~$\alpha$ were obtained in~\cite{Chamon16n, Chamon17m}, though the same bounds were more recently derived for~$\gamma$ for a special case of~\eqref{P:trace}~($m = 1$, $\bM_\emptyset = \sigma^2 \bI$, and rank-one~$\bM_u$)~\cite{Bian17g}. These results were particularized for the problem of controlling linear systems in~\cite{Summers19p}.

Interestingly, $\alpha$ not only measures how much~$f$ violates supermodularity, but it also quantifies the loss in performance guarantee incurred from these violations.

\begin{figure}[tb]
	\centering
	\includesvg{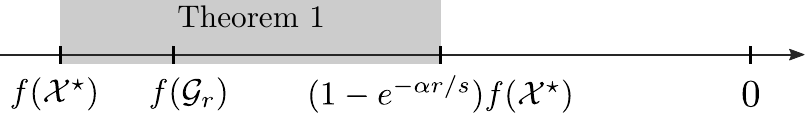}
	\caption{Illustration of the near-optimal guarantee from Theorem~\ref{T:alphaGreedy}.}
	\label{F:diagram}
\end{figure}

\begin{theorem}\label{T:alphaGreedy}

	Let~$f$ be a normalized, monotone decreasing, and~$\alpha$-supermodular set function~(i.e., $f(\calX) \leq 0$ for all~$\calX \subseteq \calO$). Then, the solution obtained after~$r$ steps of the greedy algorithm in~\eqref{E:greedy} obeys
	\begin{equation}\label{E:alphaRelOpt}
		f(\calG_r) \leq (1 - e^{-\alpha r/s}) f(\calX^\star)
			\text{.}
	\end{equation}
\end{theorem}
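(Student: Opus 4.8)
The plan is to adapt the classical greedy analysis of Nemhauser, Wolsey, and Fisher~\cite{Nemhauser78a} to the multiplicative relaxation~\eqref{E:alphaSM}, paying attention throughout to the fact that~$f \le 0$. First dispose of the degenerate regime: if~$\alpha \le 0$ then~$1 - e^{-\alpha r/s} \le 0$, so the right-hand side of~\eqref{E:alphaRelOpt} is nonnegative while~$f(\calG_r) \le 0$ by hypothesis, and the claim is immediate; hence assume~$\alpha > 0$ from now on. Write the optimum as~$\calX^\star = \{o_1, \dots, o_t\}$ with~$t = \abs{\calX^\star} \le s$, and set~$\calX_i^\star = \{o_1, \dots, o_i\}$ for~$0 \le i \le t$, so that~$\calX_0^\star = \emptyset$ and~$\calX_t^\star = \calX^\star$.

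The first substantive step is a ``master inequality'' relating the suboptimality of a greedy iterate~$\calG_j$ to the progress made by the next greedy step. Since~$f$ is monotone decreasing, $f(\calG_j \cup \calX^\star) \le f(\calX^\star)$, so telescoping the definition~\eqref{E:incrementalGain} along~$\calX_0^\star \subset \calX_1^\star \subset \cdots \subset \calX_t^\star$ gives
\begin{equation*}
	f(\calG_j) - f(\calX^\star) \le f(\calG_j) - f(\calG_j \cup \calX^\star) = \sum_{i=1}^{t} \Delta_{o_i} f\left(\calG_j \cup \calX_{i-1}^\star\right).
\end{equation*}
For each~$i$, apply~\eqref{E:alphaSM} with~$\calA = \calG_j \subseteq \calB = \calG_j \cup \calX_{i-1}^\star$: when~$o_i \notin \calB$ this yields~$\Delta_{o_i} f(\calG_j \cup \calX_{i-1}^\star) \le \alpha^{-1}\,\Delta_{o_i} f(\calG_j)$, and when~$o_i \in \calB$ the left side equals~$0$ while~$\Delta_{o_i} f(\calG_j) \ge 0$ by monotonicity, so the same bound holds trivially. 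Summing over~$i$ and bounding each of the~$t \le s$ terms~$\Delta_{o_i} f(\calG_j)$ by~$\max_{w \in \calO \setminus \calG_j} \Delta_w f(\calG_j)$ (which is itself~$\ge 0$), one obtains~$f(\calG_j) - f(\calX^\star) \le (s/\alpha)\max_w \Delta_w f(\calG_j)$. By the greedy rule~\eqref{E:greedy}, the maximizing~$w$ is exactly the element appended at step~$j+1$, so~$\max_w \Delta_w f(\calG_j) = f(\calG_j) - f(\calG_{j+1})$, and therefore
\begin{equation*}
	f(\calG_j) - f(\calX^\star) \le \frac{s}{\alpha}\left[ f(\calG_j) - f(\calG_{j+1}) \right].
\end{equation*}

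Next I would turn this into a recursion for the gap~$\delta_j \triangleq f(\calG_j) - f(\calX^\star)$, which is~$\ge 0$ for~$j \le s$ because~$\calG_j$ is then feasible for~\eqref{P:cardConstraint}. Rearranging the master inequality gives~$\delta_{j+1} \le (1 - \alpha/s)\,\delta_j$, and iterating from~$\delta_0 = f(\emptyset) - f(\calX^\star) = -f(\calX^\star)$ (using that~$f$ is normalized) yields~$\delta_r \le (1 - \alpha/s)^r(-f(\calX^\star))$. Finally, the elementary inequality~$1 - x \le e^{-x}$ gives~$(1 - \alpha/s)^r \le e^{-\alpha r/s}$; since~$-f(\calX^\star) \ge 0$ this upgrades the estimate to~$\delta_r \le e^{-\alpha r/s}(-f(\calX^\star))$, i.e.
\begin{equation*}
	f(\calG_r) \le f(\calX^\star) - e^{-\alpha r/s} f(\calX^\star) = \left(1 - e^{-\alpha r/s}\right) f(\calX^\star),
\end{equation*}
which is~\eqref{E:alphaRelOpt}. (If~$r > s$ the iterates are no longer feasible, but then~$f(\calG_r) \le f(\calG_s) \le f(\calX^\star) \le (1-e^{-\alpha r/s})f(\calX^\star)$, since the last factor lies in~$(0,1)$ and~$f(\calX^\star) \le 0$, so nothing more is required.)

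I expect no deep obstacle: the argument is structurally the standard Nemhauser--Wolsey--Fisher induction. The only points needing genuine care are (i) keeping every inequality correctly oriented, since~$f \le 0$ means that dividing by~$\alpha$ and multiplying through by~$f(\calX^\star)$ each reverse directions, and (ii) making the telescoping bound degrade gracefully in the corner cases~$\abs{\calX^\star} < s$ and~$o_i \in \calG_j$, which is exactly why those sub-cases were handled explicitly above rather than swept into ``without loss of generality''.
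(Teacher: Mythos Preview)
Your proposal is correct and follows essentially the same route as the paper's proof: the Nemhauser--Wolsey--Fisher telescoping over the elements of~$\calX^\star$, the~$\alpha$-supermodularity bound on each increment, the greedy step to obtain the master inequality, and the geometric recursion on the gap~$\delta_j$. Your sign convention~$\delta_j = f(\calG_j) - f(\calX^\star) \geq 0$ is the negative of the paper's~$\delta_j = f(\calX^\star) - f(\calG_j) \leq 0$, but the arguments are otherwise line-for-line the same; you are also a bit more careful about the edge cases~$\alpha \leq 0$, $\abs{\calX^\star} < s$, and~$o_i \in \calG_j$.

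One small correction: your parenthetical treatment of~$r > s$ asserts~$f(\calG_s) \leq f(\calX^\star)$, which is false in general---greedy need not attain the optimum. Fortunately this side argument is unnecessary. The master inequality and hence the recursion~$\delta_{j+1} \leq (1 - \alpha/s)\,\delta_j$ hold for every~$j \geq 0$ regardless of whether~$\calG_j$ is feasible, and since~$1 - \alpha/s \geq 0$ the iteration~$\delta_r \leq (1-\alpha/s)^r \delta_0$ goes through for arbitrary~$r$ without ever using the sign of the intermediate~$\delta_j$. So you can simply drop the parenthetical.
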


\begin{proof}
	See appendix.
\end{proof}

Theorem~\ref{T:alphaGreedy} bounds the suboptimality of the greedy solution from~\eqref{E:greedy} when its objective is~$\alpha$-supermodular. Indeed, since~$f$ is a non-positive function, it guarantees that~$f(\calG_r)$ cannot be too far from the optimal value~(Figure~\ref{F:diagram}). At the same time, it quantifies the effect of relaxing the supermodularity hypothesis typically used to provide performance guarantees in these settings. In fact, if~$f$ is supermodular~($\alpha = 1$) and~$r = s$, we recover the guarantee~\eqref{E:greedyNWF} from~\cite{Nemhauser78a}. On the other hand, for an approximately supermodular function~($\alpha < 1$), the result in~\eqref{E:alphaRelOpt} shows that the same~$63\%$ guarantee is recovered by greedily selecting a set of size~$s/\alpha$. Hence, $\alpha$ not only measures how much~$f$ violates supermodularity, but also gives a factor by which a solution set must increase to maintain supermodular-like near-optimality. It is worth noting that, as with the original bound in~\cite{Nemhauser78a}, \eqref{E:alphaRelOpt} is not tight and that better results are typically obtained in practice~(see Section~\ref{S:Sims}).

Although $\alpha$-supermodularity yields a multiplicative approximation factor, finding meaningful bounds on~$\alpha$ can be challenging for certain set functions, such as the objective of~\eqref{P:eig}. It is therefore useful to look at approximate supermodularity from a different perspective, as proposed in~\cite{Krause10s}.

\begin{definition}[$\epsilon$-supermodularity]

A set function~$f: 2^\calO \to \setR$ is \emph{$\epsilon$-supermodular}, for~$\epsilon \in \setR$, if for all multisets~$\calA \subseteq \calB \subseteq \calO$ and all~$u \in \calO \setminus \calB$ it holds that
\begin{equation}\label{E:epsilonSM}
	\Delta_u f(\calA) \geq \Delta_u f(\calB) - \epsilon
		\text{.}
\end{equation}

\end{definition}

Again, we say~$f$ is supermodular if~$\epsilon \leq 0$ and approximately supermodular otherwise. As with~$\alpha$, we want the best~$\epsilon$ that satisfies~\eqref{E:epsilonSM}, which is given by
\begin{equation}\label{E:epsilon}
	\epsilon =
	\max_{\substack{\calA \subseteq \calB \subseteq \calO \\
		u \in \calO \setminus \calB}}\ 
	\Delta_u f(\calB) - \Delta_u f(\calA)
		\text{.}
\end{equation}
In contrast to~$\alpha$-supermodularity, we obtain an additive approximation guarantee for the greedy minimization of $\epsilon$-supermodular functions.

\begin{theorem}\label{T:epsilonGreedy}

	Let~$f$ be a normalized, monotone decreasing, and~$\epsilon$-supermodular set function~(i.e., $f(\calX) \leq 0$ for all~$\calX \subseteq \calO$). Then, the solution obtained after~$r$ steps of the greedy algorithm in~\eqref{E:greedy} obeys
	\begin{equation}\label{E:epsilonRelOpt}
		f(\calG_r) \leq (1-e^{-r/s}) \left[ f(\calX^\star) + s \cdot \epsilon \right]
			\text{.}
	\end{equation}
\end{theorem}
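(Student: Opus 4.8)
The plan is to adapt the classical Nemhauser--Wolsey--Fisher greedy argument, carrying through the extra additive slack introduced by $\epsilon$-supermodularity. Fix an optimal set $\calX^\star = \{o_1, \dots, o_t\}$ with $t = \abs{\calX^\star} \leq s$, and let $\calG_j$ denote the greedy set after $j$ steps. First I would bound the optimality gap $f(\calG_j) - f(\calX^\star)$ at an arbitrary step~$j$. Since $f$ is monotone decreasing, $f(\calX^\star) \geq f(\calG_j \cup \calX^\star)$, so it suffices to control $f(\calG_j) - f(\calG_j \cup \calX^\star)$. Telescoping along the chain $\calG_j \subseteq \calG_j \cup \{o_1\} \subseteq \dots \subseteq \calG_j \cup \calX^\star$ writes this difference as $\sum_{i=1}^t \Delta_{o_i} f\big(\calG_j \cup \{o_1,\dots,o_{i-1}\}\big)$, and applying~\eqref{E:epsilonSM} with $\calA = \calG_j$ bounds each term by $\Delta_{o_i} f(\calG_j) + \epsilon$. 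Note that $\epsilon \geq 0$ always holds, since~\eqref{E:epsilon} is a maximum over a set that includes the choice $\calA = \calB$; this is what makes the per-term slack point in the right direction.

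Next I would invoke the greedy rule: because $\calG_{j+1}$ adds the element of maximal gain, $f(\calG_j) - f(\calG_{j+1}) \geq \Delta_{o_i} f(\calG_j)$ for every $o_i \in \calO \setminus \calG_j$, whereas $\Delta_{o_i} f(\calG_j) = 0$ when $o_i \in \calG_j$; in both cases $\Delta_{o_i} f(\calG_j) \leq f(\calG_j) - f(\calG_{j+1})$. Summing over $i$, and using $t \leq s$ together with $f(\calG_j) - f(\calG_{j+1}) \geq 0$ (monotonicity) and $\epsilon \geq 0$, gives the recursion
\begin{equation*}
	f(\calG_j) - f(\calX^\star) \;\leq\; s\big[\,f(\calG_j) - f(\calG_{j+1})\,\big] + s\,\epsilon
		\text{.}
\end{equation*}

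Then I would solve this linear recursion. Setting $\eta_j = f(\calG_j) - f(\calX^\star) - s\epsilon$, the displayed inequality rearranges to $\eta_{j+1} \leq (1 - 1/s)\,\eta_j$, hence $\eta_r \leq (1-1/s)^r \eta_0$ with $\eta_0 = f(\emptyset) - f(\calX^\star) - s\epsilon = -f(\calX^\star) - s\epsilon$ by normalization. Unwinding yields $f(\calG_r) \leq \big[1 - (1-1/s)^r\big]\big[f(\calX^\star) + s\epsilon\big]$, and I would finish using $(1-1/s)^r \leq e^{-r/s}$ together with a short sign check: if $f(\calX^\star) + s\epsilon \leq 0$ then $1 - (1-1/s)^r \geq 1 - e^{-r/s} \geq 0$ only decreases the right-hand side, giving~\eqref{E:epsilonRelOpt}; if $f(\calX^\star) + s\epsilon > 0$ the claimed bound is nonnegative while $f(\calG_r) \leq 0$ (normalized and decreasing), so it holds trivially.

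The argument is essentially routine once this structure is laid out; there is no single hard obstacle, but the points needing care are that $\epsilon \geq 0$ so the additive slack is applied with the correct sign, that the chain/greedy comparison remains valid when some $o_i$ already lies in $\calG_j$ (where the relevant marginal gains vanish), and the final case distinction on the sign of $f(\calX^\star) + s\epsilon$, which does not arise in the purely supermodular case. The subcase $\abs{\calX^\star} < s$ is immediate, since the telescoping sum then simply has fewer than $s$ nonnegative terms.
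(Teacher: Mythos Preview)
Your proposal is correct and follows essentially the same line as the paper's proof: monotonicity to compare $f(\calX^\star)$ with $f(\calG_j\cup\calX^\star)$, a telescoping sum over the elements of $\calX^\star$, $\epsilon$-supermodularity to push each increment down to $\calG_j$, the greedy choice to dominate each marginal, and then a linear recursion solved and relaxed via $(1-1/s)^r\le e^{-r/s}$. The only cosmetic difference is that the paper works with $\delta_j'=f(\calG_j)-f(\calX^\star)$ and solves the inhomogeneous recursion $\delta_{j+1}'\le(1-1/s)\delta_j'+\epsilon$ by summing a geometric series, whereas you absorb the forcing term into $\eta_j=\delta_j'-s\epsilon$ to obtain a homogeneous recursion; both routes land on the identical intermediate bound $f(\calG_r)\le\big[1-(1-1/s)^r\big]\big[f(\calX^\star)+s\epsilon\big]$. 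Your explicit treatment of the cases $o_i\in\calG_j$, $\abs{\calX^\star}<s$, and the sign of $f(\calX^\star)+s\epsilon$ in the last step is in fact more careful than the paper, which applies $1-x\le e^{-x}$ without commenting on the sign issue.
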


\begin{proof}
	See appendix.
\end{proof}

As before, $\epsilon$ quantifies the loss in performance guarantee due to relaxing supermodularity. Indeed, \eqref{E:epsilonRelOpt} reveals that~$\epsilon$-supermodular functions have the same guarantees as a supermodular function up to an additive factor of~$\Theta(s \epsilon)$. In fact, if~$\epsilon \leq (e s)^{-1} \abs{f(\calX^\star)}$~(recall that~$f(\calX^\star) \leq 0$ due to normalization), then taking~$r = 3s$ recovers the supermodular~$63\%$ approximation factor. This same factor is obtained for~$(\alpha \geq 1/3)$-supermodular functions.

Although Theorems~\ref{T:alphaGreedy} and~\ref{T:epsilonGreedy} characterize the loss in suboptimality incurred from violating supermodularity, their performance certificates depend on the specific values of~$\alpha$ and~$\epsilon$. However, \eqref{E:alpha} and~\eqref{E:epsilon} reveal that finding~$\alpha$ and~$\epsilon$ for a general function is a combinatorial problem. To give actual near-optimal guarantees for greedy solution of the sensor selection problems~\eqref{P:trace} and~\eqref{P:eig}, the next section bounds the values of~$\alpha$ and~$\epsilon$ for different scalarizations of~\eqref{E:errorCovariance}. Notice that we do not need to tackle the objectives of~\eqref{P:trace} and~\eqref{P:eig} directly thanks to the following lemma.

\begin{lemma}\label{T:preserveSM}
Consider the set functions~$f_i: 2^\calO \to \setR$, $i \in \setN$. Then, for~$\theta_i \geq 0$ and~$b \in \setR$,
\begin{enumerate}[(i)]

\item if the~$f_i$ are $\alpha_i$-supermodular, then $g = \sum_i \theta_i f_i + b$ is $\min(\alpha_i)$-supermodular;

\item if the~$f_i$ are $\epsilon_i$-supermodular, then $g = \sum_i \theta_i f_i + b$ is $\left( \sum_i \theta_i \epsilon_i \right)$-supermodular.

\end{enumerate}
\end{lemma}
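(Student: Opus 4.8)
The plan is to work directly from the definitions of $\alpha$-supermodularity~\eqref{E:alphaSM} and $\epsilon$-supermodularity~\eqref{E:epsilonSM}, observing that the incremental gain operator $\Delta_u$ is linear in the function. Specifically, for any $\calX \subseteq \calO$ and $u \in \calO \setminus \calX$, since $g = \sum_i \theta_i f_i + b$ and the constant $b$ cancels in the difference $g(\calX) - g(\calX \cup \{u\})$, we have $\Delta_u g(\calX) = \sum_i \theta_i \Delta_u f_i(\calX)$. This reduces both claims to manipulating the per-function inequalities and summing them with the nonnegative weights $\theta_i$.

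For part~(i): fix sets $\calA \subseteq \calB \subseteq \calO$ and $u \in \calO \setminus \calB$. For each $i$, $\alpha_i$-supermodularity gives $\Delta_u f_i(\calA) \geq \alpha_i \Delta_u f_i(\calB)$. The subtlety is that to chain these we need $\Delta_u f_i(\calB) \geq 0$, so that replacing $\alpha_i$ by the smaller value $\alpha := \min_i \alpha_i$ only weakens the right-hand side; this is where monotonicity of each $f_i$ is used, since a decreasing set function has nonnegative incremental gains. Hence $\Delta_u f_i(\calA) \geq \alpha_i \Delta_u f_i(\calB) \geq \alpha\, \Delta_u f_i(\calB)$ for every $i$. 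Multiplying by $\theta_i \geq 0$ and summing over $i$ yields $\Delta_u g(\calA) = \sum_i \theta_i \Delta_u f_i(\calA) \geq \alpha \sum_i \theta_i \Delta_u f_i(\calB) = \alpha\, \Delta_u g(\calB)$, which is exactly $\alpha$-supermodularity of $g$.

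For part~(ii): again fix $\calA \subseteq \calB \subseteq \calO$ and $u \in \calO \setminus \calB$. For each $i$, $\epsilon_i$-supermodularity gives $\Delta_u f_i(\calA) \geq \Delta_u f_i(\calB) - \epsilon_i$. Here no sign condition is needed: multiply by $\theta_i \geq 0$ and sum, obtaining $\Delta_u g(\calA) = \sum_i \theta_i \Delta_u f_i(\calA) \geq \sum_i \theta_i \Delta_u f_i(\calB) - \sum_i \theta_i \epsilon_i = \Delta_u g(\calB) - \big(\sum_i \theta_i \epsilon_i\big)$, establishing $\big(\sum_i \theta_i \epsilon_i\big)$-supermodularity of $g$.

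The argument is essentially routine; the only point requiring care is in part~(i), namely justifying the step $\alpha_i \Delta_u f_i(\calB) \geq \alpha\, \Delta_u f_i(\calB)$, which relies on $\Delta_u f_i(\calB) \geq 0$. In the stated lemma monotonicity is not listed as a hypothesis, so I would either add it or note that in all applications in this paper (Sections~\ref{S:approxScalarizations}--\ref{S:nearOptimalSS}) the $f_i$ are monotone decreasing, so the reduction to $\alpha = \min_i \alpha_i$ is valid; alternatively, if $\Delta_u f_i(\calB) < 0$ is allowed one simply keeps the per-$i$ factor $\alpha_i$ and the combined constant becomes less clean. I expect this to be the main (and only) obstacle.
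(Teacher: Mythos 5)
Your proof is correct and follows essentially the same route as the paper's: exploit linearity of $\Delta_u$, apply the per-function inequality, and pull out $\min_i(\alpha_i)$ (resp.\ sum the $\theta_i\epsilon_i$). The caveat you raise for part~(i)---that replacing $\alpha_i$ by $\min_i(\alpha_i)$ requires $\Delta_u f_i(\calB)\geq 0$, i.e.\ monotonicity of the $f_i$---is a fair observation; the paper's own proof makes the same step silently, and the hypothesis indeed holds in every application of the lemma in the paper.
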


\begin{proof}
See appendix.
\end{proof}

\section{APPROXIMATELY SUPERMODULAR SCALARIZATIONS}
	\label{S:approxScalarizations}

Theorems~\ref{T:alphaGreedy} and~\ref{T:epsilonGreedy} apply to set functions that are (i)~normalized, (ii)~monotone decreasing, and (iii)~approximately supermodular. By construction, the objective of~\eqref{P:sensorSelection} is normalized~[(i)]. In this section, we obtain properties~(ii) and~(iii) by leveraging Lemma~\ref{T:preserveSM} and focusing only on the scalarizations underlying the objectives of~\eqref{P:trace} and~\eqref{P:eig}, namely the trace
\begin{equation}\label{E:traceY}
	t(\calX) = \trace[\bY_k(\calX)]
\end{equation}
and the spectral norm
\begin{equation}\label{E:normY}
	e(\calX) = \norm{\bY_k(\calX)}
		\text{,}
\end{equation}
for~$\bY_k$ defined as in~\eqref{E:errorCovariance}.

To establish that set functions~$t$ and~$e$ are monotone decreasing~[(ii)], we prove that~$\bY_k(\calX)$ is a decreasing set function in the PSD cone~(Lemma~\ref{T:monotonicity}). The definition of Loewner order and the monotonicity of the trace~\cite{Horn13} imply the desired result. The monotone decreasing property of the objectives of~\eqref{P:trace} and~\eqref{P:eig} then follows immediately from~$\theta_j \geq 0$.

\begin{lemma}\label{T:monotonicity}

The matrix-valued set function~$\bY_k(\calX)$ in~\eqref{E:errorCovariance} is monotonically decreasing with respect to the PSD cone, i.e., $\calA \subseteq \calB \subseteq \calO \Rightarrow \bY_k(\calA) \succeq \bY_k(\calB)$.

\end{lemma}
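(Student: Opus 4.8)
The plan is to show that adding sensors can only shrink the error covariance matrix in the Loewner order, which follows from the monotonicity of matrix inversion on the PSD cone. Recall that $\bY_k(\calX) = \bigl( \bM_{\emptyset,k} + \sum_{u \in \calX} \bM_{u,k} \bigr)^{-1}$ with $\bM_{\emptyset,k} \succ 0$ and $\bM_{u,k} \succeq 0$ for all $u \in \calO$. Define the precision (information) matrix $\bW_k(\calX) = \bM_{\emptyset,k} + \sum_{u \in \calX} \bM_{u,k}$, so that $\bY_k(\calX) = \bW_k(\calX)^{-1}$ and, because $\bM_{\emptyset,k} \succ 0$ and each $\bM_{u,k} \succeq 0$, we have $\bW_k(\calX) \succ 0$ for every $\calX \subseteq \calO$; in particular every $\bW_k(\calX)$ is invertible and the set function $\bY_k$ is well defined.

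The first step is to establish monotonicity of the precision matrix: if $\calA \subseteq \calB \subseteq \calO$, then $\bW_k(\calB) - \bW_k(\calA) = \sum_{u \in \calB \setminus \calA} \bM_{u,k}$, which is a sum of PSD matrices and hence PSD. Therefore $\bW_k(\calA) \preceq \bW_k(\calB)$. The second step is to invoke the operator-antitonicity of the matrix inverse on the positive definite cone: for $0 \prec \bX \preceq \bZ$ one has $\bX^{-1} \succeq \bZ^{-1}$ (see, e.g., \cite{Horn13}). Applying this with $\bX = \bW_k(\calA)$ and $\bZ = \bW_k(\calB)$ yields $\bY_k(\calA) = \bW_k(\calA)^{-1} \succeq \bW_k(\calB)^{-1} = \bY_k(\calB)$, which is exactly the claim.

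There is essentially no obstacle here — the argument is two lines once the operator-antitonicity of inversion is taken as known. If the paper prefers to be self-contained, the antitonicity fact can be proved in-line: writing $\bX \preceq \bZ$ as $\bI \preceq \bX^{-1/2} \bZ \bX^{-1/2}$ (congruence by $\bX^{-1/2}$ preserves Loewner order), so every eigenvalue of $\bX^{-1/2} \bZ \bX^{-1/2}$ is at least $1$, hence every eigenvalue of its inverse $\bX^{1/2} \bZ^{-1} \bX^{1/2}$ is at most $1$, i.e.\ $\bX^{1/2} \bZ^{-1} \bX^{1/2} \preceq \bI$, and congruence by $\bX^{-1/2}$ again gives $\bZ^{-1} \preceq \bX^{-1}$. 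The only point worth a word of care is confirming $\bW_k(\calX) \succ 0$ (not merely $\succeq 0$) so that the inverses and the square roots $\bX^{\pm 1/2}$ are legitimate; this is immediate from $\bM_{\emptyset,k} \succ 0$. Note that this lemma handles property (ii) for both scalarizations at once: monotonicity of $\trace(\cdot)$ and of $\norm{\cdot} = \lmax(\cdot)$ on the PSD cone then gives that $t$ and $e$ in \eqref{E:traceY}--\eqref{E:normY} are monotone decreasing, and the weights $\theta_j \geq 0$ propagate this to the objectives of \eqref{P:trace} and \eqref{P:eig}.
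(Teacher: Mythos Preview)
Your proof is correct and follows essentially the same route as the paper: both write $\bY_k(\calX)$ as the inverse of an additive (modular) set function $\bR(\calX) = \bM_{\emptyset,k} + \sum_{u\in\calX}\bM_{u,k}$ (your $\bW_k$), observe that $\calA \subseteq \calB$ gives $\bR(\calA) \preceq \bR(\calB)$ since the difference is a sum of PSD matrices, and then invoke operator-antitonicity of matrix inversion. Your version is slightly more detailed in that you verify $\bW_k(\calX) \succ 0$ and supply a self-contained congruence proof of the antitone property, whereas the paper simply cites \cite{Bhatia97m}.
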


\begin{proof}

The monotone decreasing nature of~$\bY_k$ stems from the fact that matrix inversion is an operator antitone function, i.e., that for~$\bX,\bY \succeq 0$, it holds that~$\bX \succeq \bY \Leftrightarrow \bX^{-1} \preceq \bY^{-1}$~\cite{Bhatia97m}. To see this, write~\eqref{E:errorCovariance} as~$\bY_k(\calX) = \bR(\calX)^{-1}$, where~$\bR(\calX) = \bM_{\emptyset,k} + \sum_{u \in \calX} \bM_{u,k}$. Then, notice that since~$\bR$ is a sum of PSD matrices, it holds that~$\bR(\calA) \succeq 0$ for all~$\calA \subseteq \calO$. Moreover, $\bR$ is a modular~(additive) function, i.e., $\bR(\calA \cup \calB) = \bR(\calA) + \bR(\calB)$. Hence, for~$\calA \subseteq \calB \subseteq \calO$, we obtain
\begin{equation}\label{E:increasingR}
	\bR(\calB) = \bR(\calA) + \bR(\calB \setminus \calA)
		\succeq \bR(\calA)
		\text{.}
\end{equation}
It is straightforward to obtain from~\eqref{E:increasingR} that for~$\calA \subseteq \calB \subseteq \calO$
\begin{equation*}
	\bR(\calA) \preceq \bR(\calB)
	\Leftrightarrow
	\bR(\calA)^{-1} \succeq \bR(\calB)^{-1}
	\Leftrightarrow
	\bY_k(\calA) \succeq \bY_k(\calB)
		\text{.}
\end{equation*}
\end{proof}

The remainder of this section is dedicated to showing that the trace and the spectral norm are approximately supermodular scalarizations of~$\bY_k$, starting with the set trace function~\eqref{E:traceY}. In what follows, we omit the dependence of~$\bY$, $\bM_\emptyset$, and~$\bM_u$ on~$k$ for clarity.

\begin{theorem}\label{T:alphaBound}

Let~$t$ be the set function in~\eqref{E:traceY}. Then, $t$ is~$\alpha$-supermodular with
\begin{equation}\label{E:alphaBound}
	\alpha \geq \frac{\mu_\textup{min}}{\mu_\textup{max}} > 0
		\text{,}
\end{equation}
where
\begin{equation*}
	0 < \mu_\textup{min} \leq \lmin\left[ \bM_{\emptyset} \right] \leq
		\lmax\left[ \bM_{\emptyset} + \sum_{u \in \calO} \bM_u \right]
			\leq \mu_\textup{max}
			\text{.}
\end{equation*}

\end{theorem}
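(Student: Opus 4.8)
The plan is to bound the ratio $\Delta_u t(\calA)/\Delta_u t(\calB)$ for arbitrary $\calA \subseteq \calB \subseteq \calO$ and $u \notin \calB$ from below, uniformly over all such triples, using the explicit form $\bY(\calX) = \bR(\calX)^{-1}$ with $\bR(\calX) = \bM_\emptyset + \sum_{v \in \calX}\bM_v$. The starting point is to get a closed-form expression for the incremental gain $\Delta_u t(\calX) = \trace[\bR(\calX)^{-1}] - \trace[(\bR(\calX)+\bM_u)^{-1}]$. I would apply the Sherman--Morrison--Woodbury identity (or, more directly, the matrix identity $\bZ^{-1} - (\bZ+\bM)^{-1} = \bZ^{-1}\bM(\bZ+\bM)^{-1}$) to write
\begin{equation*}
	\Delta_u t(\calX) = \trace\!\left[ \bR(\calX)^{-1} \bM_u \bigl(\bR(\calX)+\bM_u\bigr)^{-1} \right]
		\text{.}
\end{equation*}
Since $\bM_u \succeq 0$ and the $\bR$'s are positive definite, this is manifestly nonnegative, confirming monotonicity along the way; the content is in comparing it across $\calA$ and $\calB$.

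Next I would exploit the Loewner ordering $\bR(\calA) \preceq \bR(\calB)$ from Lemma~\ref{T:monotonicity} (more precisely from~\eqref{E:increasingR}), together with operator antitonicity of inversion, to sandwich each factor in the trace expression. The key observation is that all the matrices $\bR(\calX)$ and $\bR(\calX)+\bM_u$ lie between $\mu_\textup{min}\bI$ and $\mu_\textup{max}\bI$: indeed $\bR(\calX) \succeq \bM_\emptyset \succeq \lmin[\bM_\emptyset]\bI \succeq \mu_\textup{min}\bI$, and $\bR(\calX)+\bM_u \preceq \bM_\emptyset + \sum_{v\in\calO}\bM_v \preceq \mu_\textup{max}\bI$. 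Hence each inverse $\bR(\calX)^{-1}$ obeys $\mu_\textup{max}^{-1}\bI \preceq \bR(\calX)^{-1} \preceq \mu_\textup{min}^{-1}\bI$. The strategy is then: for the numerator $\Delta_u t(\calA)$, lower-bound it by replacing $\bR(\calA)^{-1}$ and $(\bR(\calA)+\bM_u)^{-1}$ with their smallest admissible multiples of the identity, namely $\mu_\textup{max}^{-1}\bI$, giving $\Delta_u t(\calA) \geq \mu_\textup{max}^{-2}\trace[\bM_u]$; for the denominator $\Delta_u t(\calB)$, upper-bound it by replacing those inverses with $\mu_\textup{min}^{-1}\bI$, giving $\Delta_u t(\calB) \leq \mu_\textup{min}^{-2}\trace[\bM_u]$. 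Taking the ratio cancels $\trace[\bM_u]$ and yields $\alpha \geq \mu_\textup{min}^2/\mu_\textup{max}^2$.

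That naive bound gives the square of the claimed ratio, so the real work is to be more careful and lose only one power. The sharper argument is to bound only \emph{one} of the two factors in each of the numerator and denominator and keep the other matched between $\calA$ and $\calB$. Concretely, using the cyclic property of the trace, write $\Delta_u t(\calX) = \trace[\bM_u^{1/2}(\bR(\calX)+\bM_u)^{-1}\bR(\calX)^{-1}\bM_u^{1/2}]$ (symmetrizing), and then for the numerator bound $(\bR(\calA)+\bM_u)^{-1} \succeq \mu_\textup{max}^{-1}\bI$ while keeping the term $\bM_u^{1/2}\bR(\calA)^{-1}\bM_u^{1/2}$, and for the denominator bound $(\bR(\calB)+\bM_u)^{-1}\preceq$ something controlled while keeping $\bM_u^{1/2}\bR(\calB)^{-1}\bM_u^{1/2}$; since $\bR(\calA)^{-1}\succeq\bR(\calB)^{-1}$ the retained pieces compare in the right direction, and only one scalar factor of $\mu_\textup{min}/\mu_\textup{max}$ survives. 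I expect the main obstacle to be exactly this bookkeeping: arranging the two-sided estimates so that the matrix $\bM_u$ (whose trace could vanish only if $\bM_u = 0$, in which case $\Delta_u t \equiv 0$ and the inequality is vacuous) cancels exactly and the scalar constants combine to $\mu_\textup{min}/\mu_\textup{max}$ rather than its square, while making sure every Loewner inequality used is applied to a positive semidefinite conjugation so that monotonicity of the trace is legitimate. Strict positivity of $\alpha$ is then immediate from $\mu_\textup{min} > 0$, which holds because $\bM_\emptyset \succ 0$.
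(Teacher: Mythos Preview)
Your approach is essentially the paper's: write $\Delta_u t(\calX)=\trace\!\bigl[\bY(\calX)\,\bM_u\,\bY(\calX\cup\{u\})\bigr]$ via the matrix inversion identity, pull out an eigenvalue of one of the two $\bY$-factors as a scalar, and then use the Loewner monotonicity $\bY(\calA\cup\{u\})\succeq\bY(\calB\cup\{u\})$ (equivalently $\bR(\calA)^{-1}\succeq\bR(\calB)^{-1}$) so that the remaining trace ratio is at least one. The paper packages the eigenvalue-extraction step as a separate lemma giving
\[
\lmin[\bY(\calX)]\,\trace\!\bigl[\bM_u\,\bY(\calX\cup\{u\})\bigr]\;\le\;\Delta_u t(\calX)\;\le\;\lmax[\bY(\calX)]\,\trace\!\bigl[\bM_u\,\bY(\calX\cup\{u\})\bigr],
\]
which is exactly your ``bound only one factor'' idea; the final bound $\alpha\ge\mu_\textup{min}/\mu_\textup{max}$ then drops out by the same monotonicity argument you sketched.

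The one genuine technical point you flagged as the ``main obstacle'' is real, and your proposed symmetrization does not resolve it as written: conjugating by $\bM_u^{1/2}$ leaves the inner product $(\bR+\bM_u)^{-1}\bR^{-1}$, which is \emph{not} symmetric when $\bR$ and $\bM_u$ fail to commute, so the substitution $(\bR+\bM_u)^{-1}\succeq\mu_\textup{max}^{-1}\bI$ cannot be pushed through the trace by the PSD-product inequality alone. The paper's fix is to perturb to $\tilde\bM_u=\bM_u+\epsilon\bI\succ0$, rewrite $\Delta_u t(\calX)=\trace\!\bigl[\bY(\calX)\,(\bY(\calX)+\tilde\bM_u^{-1})^{-1}\,\bY(\calX)\bigr]$, and symmetrize via $\bY(\calX)^{1/2}$, which \emph{does} give a product of two PSD matrices; the Wang-type bound $\lmin[A]\trace[B]\le\trace[AB]\le\lmax[A]\trace[B]$ then applies, and one takes $\epsilon\to0$. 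An alternative route that avoids perturbation is to observe that $\bD\triangleq\bR^{-1}\bM_u(\bR+\bM_u)^{-1}=\bR^{-1}-(\bR+\bM_u)^{-1}$ is itself symmetric PSD and satisfies $\bD(\bR+\bM_u)=\bR^{-1}\bM_u$, whence $\trace[\bR^{-1}\bM_u]=\trace[\bD(\bR+\bM_u)]$ is squeezed between $\lmin[\bR+\bM_u]\trace[\bD]$ and $\lmax[\bR+\bM_u]\trace[\bD]$; this delivers precisely your desired one-factor bound.
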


\begin{remark}

Although there exist examples for which~$t(\calX)$ is not supermodular, the general statement of Theorem~\ref{T:alphaBound} does not allow us to claim that~$\alpha < 1$. A simple counter-example involves the case in which~$\bM_{\emptyset} = \mu_0 \bI$ and~$\bM_u = \mu_u \bI$, $\mu_0,\mu_u \geq 0$, so that~$t$ becomes effectively a scalar function of~$\mu_0,\mu_u$. Since scalar convex functions of positive modular functions are supermodular~\cite{Lovasz82s}, we have~$\alpha \geq 1$ in this case.

\end{remark}

The proof of Theorem~\ref{T:alphaBound} relies on the following bounds on the variation~$\Delta_u t(\calX)$:
\begin{lemma}\label{T:DeltaLemma}

For all~$\calX \subset \calO$ and~$u \in \calO \setminus \calX$, it holds that
\vspace*{-5pt}
\begin{multline}\label{E:DeltaBound}
	\lmin\left[ \bY(\calX) \right]
	\trace \left[ \bM_u \bY\left( \calX \cup \{u\} \right) \right]
	\leq
	\Delta_u t(\calX) \leq{}
	\\
	\lmax\left[ \bY(\calX) \right]
	\trace \left[ \bM_u \bY\left( \calX \cup \{u\} \right) \right]
		\text{.}
\end{multline}
\end{lemma}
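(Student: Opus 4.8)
Write $\bR = \bM_\emptyset + \sum_{v \in \calX} \bM_v$, so that $\bY(\calX) = \bR^{-1}$ and $\bY(\calX \cup \{u\}) = (\bR + \bM_u)^{-1}$; both are positive definite since $\bM_\emptyset \succ 0$. The plan is to first rewrite the increment $\Delta_u t(\calX) = \trace[\bY(\calX)] - \trace[\bY(\calX\cup\{u\})]$ in a form to which eigenvalue bounds on $\bY(\calX)$ can be applied. Using the resolvent identity $\bA^{-1} - \bB^{-1} = \bA^{-1}(\bB - \bA)\bB^{-1}$ with $\bA = \bR$ and $\bB = \bR + \bM_u$ gives $\bY(\calX) - \bY(\calX\cup\{u\}) = \bY(\calX)\,\bM_u\,\bY(\calX\cup\{u\})$, hence
\begin{equation*}
	\Delta_u t(\calX) = \trace\!\left[ \bY(\calX)\,\bM_u\,\bY(\calX\cup\{u\}) \right] .
\end{equation*}

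The obstacle is that $\bM_u\bY(\calX\cup\{u\})$ is not symmetric, so one cannot simply replace $\bY(\calX)$ by $\lmin[\bY(\calX)]\bI$ or $\lmax[\bY(\calX)]\bI$ inside the trace---the trace of a product of a PSD matrix with a merely non-symmetric matrix can take either sign. To repair this, I would conjugate by $\bZ \triangleq \bY(\calX)^{1/2} \succ 0$. Since $\bR = \bZ^{-2}$, the factorization $\bR + \bM_u = \bZ^{-1}(\bI + \bT)\bZ^{-1}$ with $\bT \triangleq \bZ\bM_u\bZ \succeq 0$ gives $\bY(\calX\cup\{u\}) = \bZ(\bI + \bT)^{-1}\bZ$, and therefore
\begin{equation*}
	\bZ\,\bM_u\bY(\calX\cup\{u\})\,\bZ^{-1} = \bT(\bI+\bT)^{-1} \triangleq \bW ,
\end{equation*}
where $\bW$ is a genuine PSD matrix ($0 \preceq \bW \prec \bI$, being a matrix function of $\bT \succeq 0$). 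Because the trace is similarity-invariant, $\trace[\bM_u\bY(\calX\cup\{u\})] = \trace[\bW]$, while $\bY(\calX)\,\bM_u\bY(\calX\cup\{u\}) = \bZ^2 \cdot \bZ^{-1}\bW\bZ = \bZ\bW\bZ$, so $\Delta_u t(\calX) = \trace[\bZ\bW\bZ] = \trace[\bY(\calX)\,\bW]$.

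Once both quantities are expressed through the single PSD matrix $\bW$, the claim follows at once: sandwiching $\lmin[\bY(\calX)]\,\bI \preceq \bY(\calX) \preceq \lmax[\bY(\calX)]\,\bI$ and using that $\bA \preceq \bB$ implies $\trace[\bA\bW] \leq \trace[\bB\bW]$ for $\bW \succeq 0$ (since $\trace[(\bB-\bA)\bW] = \trace[\bW^{1/2}(\bB-\bA)\bW^{1/2}] \geq 0$) yields $\lmin[\bY(\calX)]\trace[\bW] \leq \trace[\bY(\calX)\bW] \leq \lmax[\bY(\calX)]\trace[\bW]$; substituting $\trace[\bW] = \trace[\bM_u\bY(\calX\cup\{u\})]$ and $\trace[\bY(\calX)\bW] = \Delta_u t(\calX)$ is exactly~\eqref{E:DeltaBound}.

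I expect the crux to be the middle step: recognizing that the non-symmetry of $\bM_u\bY(\calX\cup\{u\})$ breaks the naive sandwiching and that conjugation by $\bY(\calX)^{1/2}$ fixes it by exhibiting that operator as similar to the honestly PSD matrix $\bW$. The remaining ingredients---the resolvent identity, the factorization of $\bR + \bM_u$, and monotonicity of the trace against a PSD matrix---are routine. An essentially equivalent route applies the Woodbury identity with $\bM_u = \bM_u^{1/2}\bM_u^{1/2}$ to write $\Delta_u t(\calX) = \trace[\bK\,\bG\bR^{-2}\bG]$ and $\trace[\bM_u\bY(\calX\cup\{u\})] = \trace[\bK\,\bG\bR^{-1}\bG]$ with $\bG = \bM_u^{1/2}$ and $\bK = (\bI + \bG\bR^{-1}\bG)^{-1} \succ 0$, and then compares $\bG\bR^{-2}\bG$ with $\bG\bR^{-1}\bG$ through the commuting bounds $\lmin[\bR^{-1}]\,\bR^{-1} \preceq \bR^{-2} \preceq \lmax[\bR^{-1}]\,\bR^{-1}$.
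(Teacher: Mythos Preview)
Your proof is correct and follows essentially the same strategy as the paper's: both start from the identity $\Delta_u t(\calX)=\trace[\bY(\calX)\bM_u\bY(\calX\cup\{u\})]$, both conjugate by $\bY(\calX)^{1/2}$ to replace the non-symmetric factor by a PSD one, and both finish with the eigenvalue sandwich $\lmin[\bY(\calX)]\trace(\cdot)\le\trace[\bY(\calX)\,\cdot\,]\le\lmax[\bY(\calX)]\trace(\cdot)$ for a PSD argument. In fact your $\bW=\bT(\bI+\bT)^{-1}$ with $\bT=\bY(\calX)^{1/2}\bM_u\bY(\calX)^{1/2}$ coincides exactly with the paper's $\bZ(\calX,u)=\bY(\calX)^{1/2}[\bY(\calX)+\bMt_u^{-1}]^{-1}\bY(\calX)^{1/2}$ via the push-through identity.

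The one substantive difference is in how the possible singularity of $\bM_u$ is handled. The paper's manipulation passes through $\bMt_u^{-1}$ and therefore requires a perturbation $\bMt_u=\bM_u+\epsilon\bI$ followed by a continuity argument as $\epsilon\to 0$. Your factorization $\bR+\bM_u=\bZ^{-1}(\bI+\bT)\bZ^{-1}$ never inverts $\bM_u$, so the argument goes through directly for all $\bM_u\succeq 0$. This makes your version slightly cleaner, though the underlying idea is the same.
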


\begin{proof}
See appendix.
\end{proof}

Theorem~\ref{T:alphaBound} then follows readily from this technical lemma.

\begin{proof}[Proof of Theorem~\ref{T:alphaBound}]
Notice from~\eqref{E:incrementalGain} and~\eqref{E:alpha} that~$\alpha$ can be written as
\begin{equation*}
	\alpha = \min_{\substack{\calA \subset \calB \subseteq \calO \\ u \in \calO \setminus \calB}}
		\frac{\Delta_u t(\calA)}{\Delta_u t(\calB)}
		\text{.}
\end{equation*}
From Lemma~\ref{T:DeltaLemma}, we then obtain
\begin{align}
	\alpha &\geq
	\frac{
		\lmin\left[ \bY(\calA) \right]
	}{
		\lmax\left[ \bY(\calB) \right]
	}
	\times
	\frac{
		\trace\left[
			\bM_u \bY\left( \calA \cup \{u\} \right)
		\right]
	}{
		\trace\left[
			\bM_u \bY\left( \calB \cup \{u\} \right)
		\right]
	}
		\text{.}
		\label{E:preAlphaBound2}
\end{align}
To simplify~\eqref{E:preAlphaBound2}, recall from Lemma~\ref{T:monotonicity} that~$\bY$ is monotone decreasing in the PSD cone. Since~$\calA \subset \calB \subseteq \calO$, it holds that~$\bY\left( \calA \cup \{u\} \right) \succeq \bY\left( \calB \cup \{u\} \right)$. The ordering of the PSD cone~(Loewner order) gives us that the second term in~\eqref{E:preAlphaBound2} is always greater than one, which yields
\begin{equation*}
	\alpha \geq
	\frac{
		\lmin\left[ \bY(\calA) \right]
	}{
		\lmax\left[ \bY(\calB) \right]
	} 
		\text{.}
\end{equation*}
The lower bound in~\eqref{E:alphaBound} is readily obtained by observing that the decreasing nature of~$\bY$ implies that for any set~$\calX \subseteq \calO$:
\begin{equation*}
	\lmin\left[ \bY(\calO) \right] \leq
	\lmin\left[ \bY(\calX) \right] \leq
	\lmax\left[ \bY(\calX) \right] \leq
	\lmax\left[ \bY(\emptyset) \right]
		\text{.}
\end{equation*}
\end{proof}

Theorem~\ref{T:alphaBound} gives a deceptively simple bound on the~$\alpha$-supermodularity of the set function~$t$ in~\eqref{E:traceY} depending on the spectrum of the~$\bM_\emptyset,\bM_i$. This bound can be interpreted geometrically in terms of the ``range'' of the error covariance matrix~$\bY$. To see this, define the \emph{numerical range} of the set function~$\bY$ as
\begin{equation}\label{E:W}
	W_\calO(\bY) =
		W \left[ \bigoplus_{\calX \subseteq \calO} \bY(\calX) \right]
		\text{,}
\end{equation}
where~$\bA \oplus \bB = \blkdiag(\bA,\bB)$ is the direct sum of matrices~$\bA$ and~$\bB$ and~$W(\bM) = \{ \bx^T \bM \bx \mid \norm{\bx}_2 = 1 \}$ is the classical numerical range~\cite{Horn13}. Since the numerical range is a convex set, we can define its relative diameter as
\begin{equation}\label{E:delta}
	\Delta = \max_{\mu,\eta \in W_\calO(\bY)}
		\abs{\frac{\mu - \eta}{\mu}}
		\text{.}
\end{equation}
Then, the following holds:

\begin{proposition}

The set functions~$t$ in~\eqref{E:traceY} is~$\alpha$-supermodular with
\begin{equation*}
	\alpha \geq 1 - \Delta
		\text{,}
\end{equation*}
where~$\Delta$ is the relative diameter of the numerical range of~$\bY$ in~\eqref{E:delta}.

\end{proposition}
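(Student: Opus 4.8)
The plan is to derive the stated bound $\alpha \geq 1 - \Delta$ directly from Theorem~\ref{T:alphaBound}, showing that the two bounds are in fact the same quantity expressed in different language. First I would recall from Theorem~\ref{T:alphaBound} that $t$ is $\alpha$-supermodular with $\alpha \geq \mu_\textup{min}/\mu_\textup{max}$, where $\mu_\textup{min}$ and $\mu_\textup{max}$ are any bounds satisfying $0 < \mu_\textup{min} \leq \lmin[\bM_\emptyset]$ and $\lmax[\bM_\emptyset + \sum_{u \in \calO}\bM_u] \leq \mu_\textup{max}$. The key observation is that these eigenvalue bounds on the $\bM$'s translate into eigenvalue bounds on $\bY$: since $\bM_\emptyset \preceq \bR(\calX) \preceq \bM_\emptyset + \sum_{u \in \calO}\bM_u$ for every $\calX \subseteq \calO$ (using modularity of $\bR$ and PSD-ness of the $\bM_u$), inverting and using operator antitonicity gives $(\bM_\emptyset + \sum_u \bM_u)^{-1} \preceq \bY(\calX) \preceq \bM_\emptyset^{-1}$. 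Hence the tightest choice is $\mu_\textup{min} = \lmin[\bM_\emptyset + \sum_u \bM_u] = 1/\lmax[\bY(\calO)]$ and $\mu_\textup{max} = \lmax[\bM_\emptyset] = 1/\lmin[\bY(\emptyset)]$, and $\alpha \geq \lmin[\bY(\calO)]/\lmax[\bY(\emptyset)]$, which is exactly what the proof of Theorem~\ref{T:alphaBound} already establishes.

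Next I would connect this ratio to the numerical range. By definition~\eqref{E:W}, $W_\calO(\bY)$ is the numerical range of the block-diagonal matrix $\bigoplus_{\calX \subseteq \calO}\bY(\calX)$, which (the $\bY(\calX)$ being symmetric PSD) is simply the real interval $[\lmin, \lmax]$ where $\lmin = \min_\calX \lmin[\bY(\calX)]$ and $\lmax = \max_\calX \lmax[\bY(\calX)]$. By Lemma~\ref{T:monotonicity}, $\bY$ is monotone decreasing, so the minimum over all $\calX$ of $\lmin[\bY(\calX)]$ is attained at $\calX = \calO$ and the maximum of $\lmax[\bY(\calX)]$ at $\calX = \emptyset$; thus $W_\calO(\bY) = [\lmin[\bY(\calO)], \lmax[\bY(\emptyset)]]$. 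Then the relative diameter from~\eqref{E:delta} is $\Delta = \max_{\mu,\eta \in W_\calO(\bY)}|{(\mu-\eta)}/{\mu}|$; since all points of the interval are positive, this is maximized by taking $\mu = \lmin[\bY(\calO)]$ (smallest denominator) and $\eta = \lmax[\bY(\emptyset)]$, giving $\Delta = (\lmax[\bY(\emptyset)] - \lmin[\bY(\calO)])/\lmin[\bY(\calO)] = \lmax[\bY(\emptyset)]/\lmin[\bY(\calO)] - 1$.

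Finally I would combine the two: $1 - \Delta = 2 - \lmax[\bY(\emptyset)]/\lmin[\bY(\calO)]$. This is \emph{not} literally equal to $\lmin[\bY(\calO)]/\lmax[\bY(\emptyset)]$, so the cleanest route is to note that for any $x \geq 1$ one has $2 - x \leq 1/x$ (equivalently $2x - x^2 \leq 1$, i.e. $(x-1)^2 \geq 0$); applying this with $x = \lmax[\bY(\emptyset)]/\lmin[\bY(\calO)] \geq 1$ (the inequality following from monotonicity of $\bY$, which forces $\lmin[\bY(\calO)] \leq \lmin[\bY(\emptyset)] \leq \lmax[\bY(\emptyset)]$) gives $1 - \Delta \leq \lmin[\bY(\calO)]/\lmax[\bY(\emptyset)] \leq \alpha$, hence $\alpha \geq 1 - \Delta$ as claimed.

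The main obstacle I anticipate is the last algebraic reconciliation: the numerical-range formulation produces the bound $1 - \Delta$ while Theorem~\ref{T:alphaBound} produces $\mu_\textup{min}/\mu_\textup{max}$, and these are genuinely different numbers (the latter is never smaller), so the proposition is really a weakening of Theorem~\ref{T:alphaBound} repackaged in geometric terms. One must be careful to state the inequality in the correct direction and to justify the elementary bound $2 - x \leq 1/x$ for $x \geq 1$; everything else (the numerical range of a block-diagonal symmetric matrix, the location of the extreme eigenvalues via Lemma~\ref{T:monotonicity}, the maximization defining $\Delta$) is routine once the right reductions are made.
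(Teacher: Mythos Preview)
Your proof is correct and, on one point, more careful than the paper's. Both arguments start from the bound $\alpha \geq \lmin[\bY(\calO)]/\lmax[\bY(\emptyset)]$ established in the proof of Theorem~\ref{T:alphaBound} and then translate it into the numerical-range language via Lemma~\ref{T:monotonicity}. The divergence is in how the maximization defining~$\Delta$ is evaluated. The paper asserts that~$|(\mu-\eta)/\mu|$ is increasing in~$\mu$ and decreasing in~$\eta$, takes the maximizer at~$\mu = \lmax[\bY(\emptyset)]$, $\eta = \lmin[\bY(\calO)]$, and obtains~$\Delta = 1 - \lmin[\bY(\calO)]/\lmax[\bY(\emptyset)]$, so that~$1-\Delta$ coincides \emph{exactly} with the Theorem~\ref{T:alphaBound} bound. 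You instead observe that the true maximizer over the positive interval has~$\mu$ at the \emph{smallest} endpoint and~$\eta$ at the largest, yielding the larger value~$\Delta = \lmax[\bY(\emptyset)]/\lmin[\bY(\calO)] - 1$, and then close the gap with the elementary inequality~$2-x \leq 1/x$ for~$x \geq 1$.

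Your reading of~\eqref{E:delta} is the literal one: the paper's monotonicity claim holds only on the region~$\mu \geq \eta$ and misses your maximizer. Under the paper's evaluation the proposition is simply a geometric restatement of Theorem~\ref{T:alphaBound}; under yours it is a strict weakening of that theorem, but the stated inequality~$\alpha \geq 1-\Delta$ holds either way, and your extra algebraic step is exactly what is needed to bridge the two readings.
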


\begin{proof}
Since~$\bY \succ 0$, the numerical range in~\eqref{E:W} is the bounded convex hull of the eigenvalues of~$\bY(\calX)$ for all~$\calX \subseteq \calO$~\cite{Horn13}. We can therefore simplify~\eqref{E:delta} using the fact that it is monotonically increasing in~$\mu$ and decreasing in~$\eta$. Explicitly,
\begin{equation*}
	\Delta = \max_{\calX,\calY \subseteq \calO}
	\abs{
		\frac{
			\lmax\left[ \bY(\calY) \right]
			- \lmin\left[ \bY(\calX) \right]
		}{
			\lmax\left[ \bY(\calY) \right]
		}
	}
		\text{.}
\end{equation*}
Using the fact that~$\bY$ is monotonically decreasing~(Lemma~\ref{T:monotonicity}), this maximum is achieved for
\begin{equation*}
	\Delta = \frac{
		\lmax\left[ \bY(\emptyset) \right]
		- \lmin\left[ \bY(\calO) \right]
	}{
		\lmax\left[ \bY(\emptyset) \right]
	}
		\text{.}
\end{equation*}
The bound in~\eqref{E:alphaBound} thus becomes
\begin{equation*}
	\alpha \geq
	\frac{
		\lmin\left[ \bY(\calO) \right]
	}{
		\lmax\left[ \bY(\emptyset) \right]
	} = 1 - \Delta
		\text{.}\qedhere
\end{equation*}
\end{proof}

Hence, \eqref{E:alphaBound} bounds how much~$t$ deviates from a supermodular function~(as quantified by~$\alpha$) in terms of the numerical range of its underlying function~$\bY$. The shorter the range of~$\bY$, the more supermodular-like~\eqref{E:traceY} will be.

Notice from Lemma~\ref{T:preserveSM}, that evaluating the $\alpha$-supermodularity of the objective of~\eqref{P:trace} is a straightforward corollary of Theorem~\ref{T:alphaBound}.

\begin{corollary}\label{T:alphaBoundCombination}

The objective of~\eqref{P:trace} is $\alpha$-supermodular for
\begin{equation}\label{E:alphaTrace}
	\alpha \geq \min_{0 \leq k \leq N-1}
		\frac{
			\lmin\left[ \bM_{\emptyset,m+k} \right]
		}{
			\lmax\left[ \bM_{\emptyset,m+k} + \sum_{u \in \calO} \bM_{u,,m+k} \right]
		}
		\text{.}
\end{equation}

\end{corollary}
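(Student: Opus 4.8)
The plan is to obtain this as an immediate consequence of Theorem~\ref{T:alphaBound} together with the composition rule in Lemma~\ref{T:preserveSM}(i), since the objective of~\eqref{P:trace} is nothing but a nonnegative weighted sum of time‑shifted copies of the scalarization~$t$ in~\eqref{E:traceY}, up to an additive constant.

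First I would write $f_T(\calT) = \sum_{k=0}^{N-1} \theta_k\, t_{m+k}(\calT) - C_\emptyset$, where $t_{m+k}(\calT) = \trace[\bY_{m+k}(\calT)]$ is exactly the set function treated in Theorem~\ref{T:alphaBound} with the time index frozen at $m+k$. Applying that theorem to each $t_{m+k}$ shows it is $\alpha_k$‑supermodular; choosing the tightest admissible constants $\mu_{\textup{min}} = \lmin[\bM_{\emptyset,m+k}]$ and $\mu_{\textup{max}} = \lmax[\bM_{\emptyset,m+k} + \sum_{u\in\calO}\bM_{u,m+k}]$ in the bound~\eqref{E:alphaBound} gives
\[
	\alpha_k \;\geq\; \frac{\lmin[\bM_{\emptyset,m+k}]}{\lmax\!\left[\bM_{\emptyset,m+k} + \sum_{u\in\calO}\bM_{u,m+k}\right]} \;>\; 0 .
\]
Next, since $\theta_k \geq 0$ for every $k$ (as required in the statement of the Kalman filter sensor selection problem) and $-C_\emptyset \in \setR$ is a constant, Lemma~\ref{T:preserveSM}(i) applied with $f_i = t_{m+k}$, $\theta_i = \theta_k$, and $b = -C_\emptyset$ yields that $f_T$ is $\min_{0\le k\le N-1}(\alpha_k)$‑supermodular. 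Because the index set is finite the minimum is attained and is strictly positive, and chaining the two facts gives $\alpha \geq \min_{0\le k\le N-1}\alpha_k$, which is precisely~\eqref{E:alphaTrace}.

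This is a direct corollary, so I do not expect a genuine obstacle; the only points needing a little care are verifying that the hypotheses of Lemma~\ref{T:preserveSM}(i) hold—chiefly the nonnegativity of the weights $\theta_k$—and noting that the additive constant $-C_\emptyset$ is harmless, since it leaves every incremental gain $\Delta_u f_T$ unchanged and therefore preserves the value of $\alpha$ exactly.
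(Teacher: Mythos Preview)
Your proposal is correct and matches the paper's own argument exactly: the paper likewise notes that the corollary follows immediately from Theorem~\ref{T:alphaBound} applied at each time index together with Lemma~\ref{T:preserveSM}(i), with the constant $-C_\emptyset$ absorbed by the additive term~$b$.
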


Proceeding, we now bound the~$\epsilon$-supermodularity of~\eqref{E:normY}, once again temporarily omitting the dependences on~$k$.

\begin{theorem}\label{T:epsilonBound}

Let~$e$ be defined as in~\eqref{E:normY}. Then, $e$ is $\epsilon$-supermodular with
\begin{equation}\label{E:epsilonBoundE}
	\epsilon \leq
	\frac{
		\lmax(\sum_{u \in \calO} \bM_u)
	}{
		\lmin\left[ \bM_\emptyset \right]^{2}
	}
	\text{.}
\end{equation}
\end{theorem}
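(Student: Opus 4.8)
The plan is to bound the marginal gain $\Delta_u e(\calB)$ uniformly over all sets and elements, and then discard the other term using monotonicity. This is simpler than the $\alpha$-supermodularity analysis because an \emph{additive} certificate only requires a one-sided (upper) control on marginals. Concretely, by Lemma~\ref{T:monotonicity} and $\norm{\bY(\calX)} = \lmax[\bY(\calX)]$, the set function $e$ is monotone decreasing, so $\Delta_u e(\calA) \geq 0$ for every $\calA$ and $u \in \calO \setminus \calA$. Hence, in view of~\eqref{E:epsilon}, it suffices to prove that $\Delta_u e(\calB) \leq \lmax(\sum_{u \in \calO}\bM_u)\,/\,\lmin[\bM_\emptyset]^{2}$ for all $\calB \subseteq \calO$ and $u \in \calO \setminus \calB$, since then $\Delta_u e(\calB) - \Delta_u e(\calA) \leq \Delta_u e(\calB) \leq \epsilon$.

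To bound the marginal, write $\bR = \bM_\emptyset + \sum_{u' \in \calB}\bM_{u'}$ and $\bR_u = \bR + \bM_u$, so that $\bY(\calB) = \bR^{-1}$ and $\bY(\calB \cup \{u\}) = \bR_u^{-1}$, both symmetric positive definite. Decomposing $\bR^{-1} = \bR_u^{-1} + (\bR^{-1} - \bR_u^{-1})$ and applying Weyl's inequality gives $\lmax[\bR^{-1}] \leq \lmax[\bR_u^{-1}] + \lmax[\bR^{-1} - \bR_u^{-1}]$. Since $\bY$ is monotone decreasing (Lemma~\ref{T:monotonicity}), $\bR^{-1} - \bR_u^{-1} \succeq 0$, so its largest eigenvalue equals its spectral norm, and therefore
\begin{equation*}
	\Delta_u e(\calB) = \lmax[\bR^{-1}] - \lmax[\bR_u^{-1}] \leq \norm{\bR^{-1} - \bR_u^{-1}}
		\text{.}
\end{equation*}
Next I would use the resolvent identity $\bR^{-1} - \bR_u^{-1} = \bR^{-1}(\bR_u - \bR)\bR_u^{-1} = \bR^{-1}\bM_u\bR_u^{-1}$ and submultiplicativity of the spectral norm to obtain $\norm{\bR^{-1} - \bR_u^{-1}} \leq \norm{\bR^{-1}}\,\norm{\bM_u}\,\norm{\bR_u^{-1}}$. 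The three factors are then controlled by Loewner monotonicity: both $\bR$ and $\bR_u$ are $\bM_\emptyset$ plus a sum of PSD matrices, so $\bR, \bR_u \succeq \bM_\emptyset$, and operator antitonicity of inversion yields $\norm{\bR^{-1}}, \norm{\bR_u^{-1}} \leq 1/\lmin[\bM_\emptyset]$; likewise $\bM_u \preceq \sum_{u' \in \calO}\bM_{u'}$ gives $\norm{\bM_u} = \lmax[\bM_u] \leq \lmax[\sum_{u' \in \calO}\bM_{u'}]$. Multiplying these three estimates yields the claimed bound on $\Delta_u e(\calB)$, and hence on $\epsilon$.

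I do not expect a serious obstacle; the only conceptual point is recognizing that for a monotone decreasing function the additive violation is governed by a uniform upper bound on single marginals, after which the estimate is a routine chain of Weyl's inequality, the resolvent identity, submultiplicativity, and the Loewner ordering established in Lemma~\ref{T:monotonicity}. The one thing to be careful about is that $\bR^{-1}\bM_u\bR_u^{-1}$ is not manifestly symmetric even though it coincides with the symmetric matrix $\bR^{-1} - \bR_u^{-1}$; this is harmless since submultiplicativity of the spectral norm holds for arbitrary (not necessarily symmetric) matrices.
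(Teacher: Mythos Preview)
Your proof is correct, but it takes a genuinely different route from the paper. The paper proceeds by a homotopy argument: it parametrizes a path~$\bZ(t) = \bR(\calA) + t[\bR(\calB)-\bR(\calA)]$, writes $\Delta_u e(\calB) - \Delta_u e(\calA) = \int_0^1 \dot h(t)\,dt$ for $h(t) = \|\bZ(t)^{-1}\| - \|(\bZ(t)+\bM_u)^{-1}\|$, differentiates using the eigenvalue perturbation formula, drops a nonpositive term, and bounds the remaining quadratic form. This yields the intermediate inequality $\epsilon \leq \max \lmax[\bR(\calB)-\bR(\calA)]\,/\,\lmin[\bR(\calA)+\bM_u]^2$, which is then relaxed to~\eqref{E:epsilonBoundE}.

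Your argument is more elementary: you discard $\Delta_u e(\calA)\geq 0$ by monotonicity and bound the single marginal $\Delta_u e(\calB)$ via Weyl's inequality, the resolvent identity, and submultiplicativity. What this buys is a short, purely algebraic proof that sidesteps the differentiability of $\lmax$ (the paper implicitly assumes the top eigenvalue is simple along the path). What the paper's approach buys is a sharper intermediate bound that keeps the dependence on~$\calA$, $\calB$, and~$u$ before relaxing; in your route the numerator is controlled by $\lmax(\bM_u)$ for a single~$u$, whereas the paper's numerator involves $\lmax[\bR(\calB)-\bR(\calA)]$, so the two intermediate bounds are not directly comparable, but both collapse to the same final estimate~\eqref{E:epsilonBoundE}.
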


\begin{proof}

The proof follows a homotopy argument, i.e., we define a continuous map between~$\Delta_u e(\calA)$ and~$\Delta_u e(\calB)$ and bound its derivative using spectral bounds on the sum of Hermitian matrices. The inequality in~\eqref{E:epsilonBoundE} then follows from the fundamental theorem of calculus.

Start by defining the homotopy, with~$t \in [0,1]$,
\begin{equation}\label{E:homotopyE}
	h_{\calA\calB}(t) = \norm{\bZ(t)^{-1}}
		- \norm{\left( \bZ(t) + \bM_u \right)^{-1}}
\end{equation}
for~$\calA \subset \calB \subseteq \calO$, $\bZ(t) = \bR(\calA) + t\left[ \bR(\calB) - \bR(\calA) \right]$, and~$\bR(\calX) = \bM_{\emptyset} + \sum_{u \in \calX} \bM_{u}$. Note that since~$\bY(\calX) = \bR(\calX)^{-1}$, we have~$h_{\calA\calB}(0) = \Delta_u e(\calA)$ and~$h_{\calA\calB}(1) = \Delta_u e(\calB)$. If~$\dot{h}_{\calA\calB}(t)$ is the derivative of~$h_{\calA\calB}$ with respect to~$t$, it therefore holds that
\begin{equation}\label{E:calculus}
	\Delta_u e(\calB) = \Delta_u e(\calA)
		+ \int_{0}^{1} \dot{h}_{\calA\calB}(t) dt
		\text{.}
\end{equation}
Comparing~\eqref{E:calculus} to the definition of~$\epsilon$ in~\eqref{E:epsilon}, we obtain
\begin{equation}\label{E:epsilonHomotopyE}
	\epsilon =
		\max_{\substack{\calA \subset \calB \subseteq \calO \\
			u \in \calO \setminus \calB}}
		\int_{0}^{1} \dot{h}_{\calA\calB}(t) dt
		\text{.}
\end{equation}

We now proceed by evaluating~$\dot{h}$. We omit the dependence on~$\calA$ and~$\calB$ for conciseness. First, recall from matrix analysis that for any~$\bX \succ 0$ we have
\begin{equation}\label{E:diff}
\begin{aligned}
	\frac{d}{dt} \norm{\bX(t)^{-1}} &= 
	\bq(t)^T \left[ \frac{d}{dt} \bX(t)^{-1} \right] \bq(t)
	\\
	{}&= -\bq(t)^T \bX(t)^{-1} \bXd(t) \bX(t)^{-1} \bq(t)
		\text{,}
\end{aligned}
\end{equation}
where~$\bq(t)$ is the eigenvector relative to the maximum eigenvalue of~$\bX(t)$. To obtain~\eqref{E:diff}, we used the fact that~$\norm{\bX} = \lmax(\bX)$ for~$\bX \succeq 0$ and~$\frac{d}{dt} \lmax[\bX(t)] =\bq(t)^T \bXd(t) \bq(t)$. We then used~$\frac{d}{dt} \bX(t)^{-1} = - \bX(t)^{-1} \bXd(t) \bX(t)^{-1}$~\cite{Bhatia97m}. In view of~\eqref{E:homotopyE} and~\eqref{E:diff}, we obtain
\begin{equation}\label{E:homotopyEDiff}
\begin{aligned}
	\dot{h}(t) &=
	\bw(t)^T \left[ \bZ(t) + \bM_u \right]^{-1}
	\left[ \bR(\calB) - \bR(\calA) \vphantom{x^2} \right]
	\\
	&\qquad\quad {}\times \left[ \bZ(t) + \bM_u \right]^{-1} \bw(t)
	\\
	{}&- \bu(t)^T \bZ(t)^{-1}
	\left[ \bR(\calB) - \bR(\calA) \vphantom{x^2} \right]
	\bZ(t)^{-1} \bu(t)
		\text{,}
\end{aligned}
\end{equation}
where~$\bu(t)$~and $\bw(t)$ are the eigenvectors relative to the maximum eigenvalues of~$\bZ(t)^{-1}$ and~$\left[ \bZ(t) + \bM_u \right]^{-1}$ respectively.

We now proceed by finding a bound for~$\dot{h}$ in~\eqref{E:homotopyEDiff} that is independent of~$t$. To do so, observe that~$\bR(\calB) \succeq \bR(\calA)$, since~$\calA \subseteq \calB$ and~$\bR$ is monotone increasing~(Lemma~\ref{T:monotonicity}). By the Loewner ordering, the second term in~\eqref{E:homotopyEDiff} is therefore non-positive. Thus, using Rayleigh's spectral inequality and the fact~$\norm{\bw(t)} = 1$ for all~$t$ yields
\begin{equation*}
	\dot{h}(t) \leq
	\left\| \left( \bZ(t) + \bM_u \vphantom{x^2} \right)^{-1}
		\left[ \bR(\calB) - \bR(\calA) \right]
		\left( \bZ(t) + \bM_u \vphantom{x^2} \right)^{-1}
	\right\|
		\text{.}
\end{equation*}
Next, we once again use the fact that~$\bR(\calB) \succeq \bR(\calA)$ to obtain~$\bZ(t) \succeq \bZ(0) = \bR(\calA)$, effectively removing the dependence on~$t$. Using Cauchy-Schwartz then yields
\begin{equation*}
	\dot{h}(t) \leq \lmax\left[
		\left( \bR(\calA) + \bM_u \right)^{-2} \right]
		\lmax \left[ \bR(\calB) - \bR(\calA) \right]
		\text{,}
\end{equation*}
which can be used in~\eqref{E:epsilonHomotopyE} to get
\begin{equation}\label{E:epsilonBoundImprove}
	\epsilon \leq
	\max_{\substack{\calA \subseteq \calB \subseteq \calO \\
		u \in \calO \setminus \calB}}
	\frac{
		\lmax\left[ \bR(\calB) - \bR(\calA) \right]
	}{
		\lmin\left[ \bR(\calA) + \bM_u \right]^2
	}
		\text{,}
\end{equation}
The inequality in~\eqref{E:epsilonBoundE} is then obtained using~$\norm{\bR(\calB) - \bR(\calA)} \leq \lmax(\sum_{u \in \calO} \bM_u)$ and~$\lmin\left[ \bR(\calA) + \bM_u \right] \geq \lmin\left[ \bM_\emptyset \right]$.
\end{proof}

From Theorem~\ref{T:epsilonBound} and Lemma~\ref{T:preserveSM}, we obtain:

\begin{corollary}\label{T:epsilonBoundCombination}

The objective of~\eqref{P:eig} is $\epsilon$-supermodular for
\begin{equation}\label{E:epsilonEig}
	\epsilon \leq \sum_{k = 0}^{N-1} \theta_k
	\frac{
		\lmax(\sum_{u \in \calO} \bM_{u,m+k})
	}{
		\lmin\left[ \bM_{\emptyset,m+k} \right]^{2}
	}
		\text{,}
\end{equation}

\end{corollary}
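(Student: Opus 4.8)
The plan is to decompose the objective of~\eqref{P:eig} into a nonnegative combination of the spectral-norm scalarizations already analyzed in Theorem~\ref{T:epsilonBound}, plus an additive constant, and then invoke the composition rule of Lemma~\ref{T:preserveSM}(ii). Concretely, write $f_S(\calS) = \sum_{k=0}^{N-1} \theta_k\, e_{m+k}(\calS) - C_\emptyset$, where $e_{m+k}(\calX) = \norm{\bY_{m+k}(\calX)}$ is exactly the set function~\eqref{E:normY} instantiated at the time index~$m+k$, with $\bM_{\emptyset,m+k}$ and $\bM_{u,m+k}$ playing the roles of $\bM_\emptyset$ and $\bM_u$.

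First I would apply Theorem~\ref{T:epsilonBound} separately for each $k \in [0,N-1]$. Since $\bM_{\emptyset,m+k} \succ 0$ and $\bM_{u,m+k} \succeq 0$ by the construction in Propositions~\ref{T:filtering} and~\ref{T:smoothing}, the hypotheses of the theorem are met and we obtain that $e_{m+k}$ is $\epsilon_k$-supermodular with $\epsilon_k \le \lmax(\sum_{u\in\calO}\bM_{u,m+k}) / \lmin[\bM_{\emptyset,m+k}]^{2} < \infty$. Then I would combine these via Lemma~\ref{T:preserveSM}(ii): the objective $f_S$ has the form $g = \sum_k \theta_k e_{m+k} + b$ with $\theta_k \ge 0$ and the constant $b = -C_\emptyset \in \setR$, so $g$ is $\epsilon$-supermodular with $\epsilon \le \sum_{k=0}^{N-1}\theta_k \epsilon_k$; substituting the per-step bounds yields precisely~\eqref{E:epsilonEig}.

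There is no genuine obstacle here---the corollary is an immediate consequence of Theorem~\ref{T:epsilonBound} and Lemma~\ref{T:preserveSM}(ii). The only points worth checking are notational: matching the time-shifted matrices of~\eqref{P:eig} to the generic $\bM_\emptyset$, $\bM_u$ of Theorem~\ref{T:epsilonBound}, and noting that subtracting the normalization constant $C_\emptyset$ does not affect the $\epsilon$ constant, which is exactly what the $b$ term in Lemma~\ref{T:preserveSM} guarantees.
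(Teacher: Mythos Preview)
Your proposal is correct and matches the paper's approach exactly: the paper simply states that the corollary follows ``From Theorem~\ref{T:epsilonBound} and Lemma~\ref{T:preserveSM},'' which is precisely the decomposition-plus-composition argument you outline.
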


In the sequel, we particularize these results for the estimation problems~(i) and~(ii) in Section~\ref{S:problem}, showing that we we obtain supermodular-like guarantees when a certain measure of signal-to-noise ratio~(SNR) is small. We also argue that this is in fact the typical setting in which KFs are useful in practice.

\section{NEAR-OPTIMAL SENSOR SELECTION}
\label{S:nearOptimalSS}

\begin{algorithm}[tb]
	\caption{Cardinality~$r$ greedy set selection for~\eqref{P:sensorSelection}}
	\label{A:GreedyAlgorithm}
	\setlength{\baselineskip}{1.2\baselineskip}
\begin{algorithmic}
	\State Initialize~$\calG_0 \leftarrow \emptyset$

	\For{$j = 1,\dots,r$}
		\State $\displaystyle
			u \leftarrow \argmin_{w \in \calO \setminus \calG_j}
			\sum_{k = 0}^{N-1} \theta_k\, h\Big[ \bY_{m+k}(\calG_j \cup \{w\}) \Big]$
				
		\vspace*{0.5\baselineskip}
		
		\State $\calG_{j+1} \leftarrow \calG_j \cup \{u\}$
	\EndFor
\end{algorithmic}
\end{algorithm}

In this section, we obtain approximation guarantees for the greedy optimization of the MSE and worst-case error in filtering and smoothing problems. More specifically, we provide near-optimal certificates for the solution~$\calG_r$ obtained using Algorithm~\ref{A:GreedyAlgorithm} with~$\bY$ taken from Proposition~\ref{T:filtering}~(filtering) or Proposition~\ref{T:smoothing}~(smoothing) and with the trace or spectral norm scalarizations~$h$~(as in Section~\ref{S:KFSS}).

\subsection{Greedy sensor selection for filtering is near-optimal}
\label{S:greedyFiltering}

The main result of this section stems from applying Proposition~\ref{T:filtering} to Corollaries~\ref{T:alphaBoundCombination} and~\ref{T:epsilonBoundCombination}.

\begin{theorem}\label{T:nearOptimalFiltering}

Let~$\bY_k(\calA) = \bP_k(\calA)$, the filtering error covariance from Proposition~\ref{T:filtering}, in~\eqref{P:trace}--\eqref{P:eig} and denote their solutions as~$\widehat{\calT}^\star$ and~$\widehat{\calS}^\star$, respectively. Let~$\widehat{\calT}_g$ and~$\widehat{\calS}_g$ be the~$r$-elements greedy solutions obtained by using Algorithm~\ref{A:GreedyAlgorithm} on~\eqref{P:trace} and~\eqref{P:eig}. Then,
\begin{align*}
	f_T(\widehat{\calT}_g) &\leq (1 - e^{-\hat{\alpha} r/s})
		f_T(\widehat{\calT}^\star)
	\\
	f_S(\widehat{\calS}_g) &\leq (1 - e^{-r/s})
		\left[ f_S(\widehat{\calS}^\star)
		+ s \cdot \hat{\epsilon} \right]
\end{align*}
for~$f_T$ and~$f_S$ the objectives of~\eqref{P:trace} and~\eqref{P:eig} respectively, $\hat{\alpha} \geq \min_{k \leq N-1} \hat{\alpha}_k$, and~$\hat{\epsilon} \leq \sum_{k = 0}^{N-1} \theta_k \hat{\epsilon}_k$ with
\begin{align}
	\hat{\alpha}_k &\triangleq \frac{
		\lmin\left[ \bP_{m+k \vert m+k-1}^{-1} \right]
	}{
		\lmax\left[ \bP_{m+k \vert m+k-1}^{-1}
			+ \sum_{u \in \calO} \bV_u \right]
	}
		\label{E:alphaFiltering}
	\\
	\hat{\epsilon}_k &\triangleq \lmax\left(
		\sum_{u \in \calO} \bV_u \right)
		\lmax\left( \bP_{m+k \vert m+k-1} \right)^{2}
		\label{E:epsilonFiltering}
\end{align}
where~$\bV_u = \bH_u^T \bR_{v,u}^{-1} \bH_u$.

\end{theorem}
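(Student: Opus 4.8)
The plan is to read off the building-block matrices from Proposition~\ref{T:filtering}, substitute them into the abstract approximate-supermodularity bounds of Corollaries~\ref{T:alphaBoundCombination} and~\ref{T:epsilonBoundCombination}, and then invoke the greedy near-optimality results of Theorems~\ref{T:alphaGreedy} and~\ref{T:epsilonGreedy}. Before doing so, I would check the two structural hypotheses of those theorems for $f_T$ and $f_S$. Normalization holds by construction: the constant $C_\emptyset$ in~\eqref{P:sensorSelection} is chosen exactly so that $f_T(\emptyset) = f_S(\emptyset) = 0$, using $\bY_{m+k}(\emptyset) = \bM_{\emptyset,m+k}^{-1}$. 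Monotonicity follows from Lemma~\ref{T:monotonicity}---for the filtering covariance, $\calA \subseteq \calB \Rightarrow \bP_{m+k}(\calA) \succeq \bP_{m+k}(\calB)$ is the standard fact proved by induction on $\bP_{k \vert k-1} = \bF \bP_{k-1} \bF^T + \bR_w$ and $\bP_k = (\bP_{k \vert k-1}^{-1} + \sum_{u \in \calX} \bV_u)^{-1}$---combined with the PSD-monotonicity of $\trace(\cdot)$ and $\norm{\cdot} = \lmax(\cdot)$ and with $\theta_k \geq 0$; this also yields $f_T(\calX) \leq 0$ and $f_S(\calX) \leq 0$.

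Next I would compute $\hat\alpha$ and $\hat\epsilon$ by substitution. Proposition~\ref{T:filtering} gives $\bM_{\emptyset,m+k} = \bP_{m+k \vert m+k-1}^{-1}$ and $\bM_{u,m+k} = \bV_u$. Plugging these into~\eqref{E:alphaTrace} reproduces $\hat\alpha_k$ of~\eqref{E:alphaFiltering} verbatim, so by Corollary~\ref{T:alphaBoundCombination} (which already bundles Lemma~\ref{T:preserveSM} for the weighted sum over $k$) $f_T$ is $\hat\alpha$-supermodular with $\hat\alpha \geq \min_k \hat\alpha_k$. Plugging them into~\eqref{E:epsilonEig} gives $\sum_k \theta_k \lmax(\sum_{u \in \calO} \bV_u)/\lmin[\bP_{m+k \vert m+k-1}^{-1}]^2$; since $\lmin[\bP_{m+k \vert m+k-1}^{-1}] = \lmax[\bP_{m+k \vert m+k-1}]^{-1}$, the denominator becomes $\lmax[\bP_{m+k \vert m+k-1}]^{-2}$, which is exactly $\hat\epsilon_k$ of~\eqref{E:epsilonFiltering}, so by Corollary~\ref{T:epsilonBoundCombination} $f_S$ is $\hat\epsilon$-supermodular with $\hat\epsilon \leq \sum_k \theta_k \hat\epsilon_k$. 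Applying Theorem~\ref{T:alphaGreedy} to $f_T$ with parameter $\hat\alpha$ then gives the first inequality, and applying Theorem~\ref{T:epsilonGreedy} to $f_S$ with parameter $\hat\epsilon$ gives the second.

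The delicate point---and the one I expect to be the main obstacle---is that, unlike in the abstract model~\eqref{E:errorCovariance}, the prior $\bM_{\emptyset,m+k} = \bP_{m+k \vert m+k-1}^{-1}$ itself depends on the chosen set $\calX$ (through $\bP_{m+k-1}(\calX)$), so Corollaries~\ref{T:alphaBoundCombination} and~\ref{T:epsilonBoundCombination}, which were derived assuming $\bM_\emptyset$ fixed, and the increment estimates of Lemma~\ref{T:DeltaLemma} and Theorem~\ref{T:epsilonBound} behind them, must be applied with care. I would close this gap with the same monotonicity that drives the rest: $\bR_w \preceq \bP_{m+k \vert m+k-1}(\calX) \preceq \bP_{m+k \vert m+k-1}(\emptyset)$ for every $\calX \subseteq \calO$, which turns the set-dependent eigenvalue quantities in~\eqref{E:alphaFiltering} and~\eqref{E:epsilonFiltering} into uniform spectral bounds (e.g.\ $\lmax[\bP_{m+k \vert m+k-1}(\calX)] \leq \lmax[\bP_{m+k \vert m+k-1}(\emptyset)]$ and $\lmin[\bP_{m+k \vert m+k-1}(\calX)] \geq \lmin(\bR_w)$), after which the derivations of Section~\ref{S:approxScalarizations} carry over to the recursively-defined $\bP_{m+k}(\calX)$. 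Everything remaining is routine tracking of constants.
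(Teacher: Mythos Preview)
Your proposal is essentially the paper's approach: the paper states only that ``the main result of this section stems from applying Proposition~\ref{T:filtering} to Corollaries~\ref{T:alphaBoundCombination} and~\ref{T:epsilonBoundCombination}'' and gives no further argument, so your substitution of $\bM_{\emptyset,m+k}=\bP_{m+k\vert m+k-1}^{-1}$ and $\bM_{u,m+k}=\bV_u$ into~\eqref{E:alphaTrace} and~\eqref{E:epsilonEig}, followed by Theorems~\ref{T:alphaGreedy} and~\ref{T:epsilonGreedy}, is exactly what is intended. Your verification of normalization and monotonicity, and the identity $\lmin[\bP_{m+k\vert m+k-1}^{-1}]^{-2}=\lmax[\bP_{m+k\vert m+k-1}]^{2}$, are the only bookkeeping needed.

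The point you flag in your last paragraph---that for $N>1$ the prior $\bP_{m+k\vert m+k-1}$ depends on the sensing set through the recursion $\bP_{k\vert k-1}=\bF\bP_{k-1}(\calX)\bF^T+\bR_w$, so $\bM_{\emptyset,m+k}$ is not literally a fixed matrix as assumed in~\eqref{E:errorCovariance}---is a genuine subtlety that the paper does \emph{not} address; it treats the substitution as immediate. Your proposed fix via the uniform sandwich $\bR_w\preceq\bP_{m+k\vert m+k-1}(\calX)\preceq\bP_{m+k\vert m+k-1}(\emptyset)$ is the natural way to make the argument airtight, and it goes beyond what the paper actually proves. So your proof is at least as complete as the paper's, and arguably more careful on this point.
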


\begin{figure}[tb]
	\centering
	\includesvg{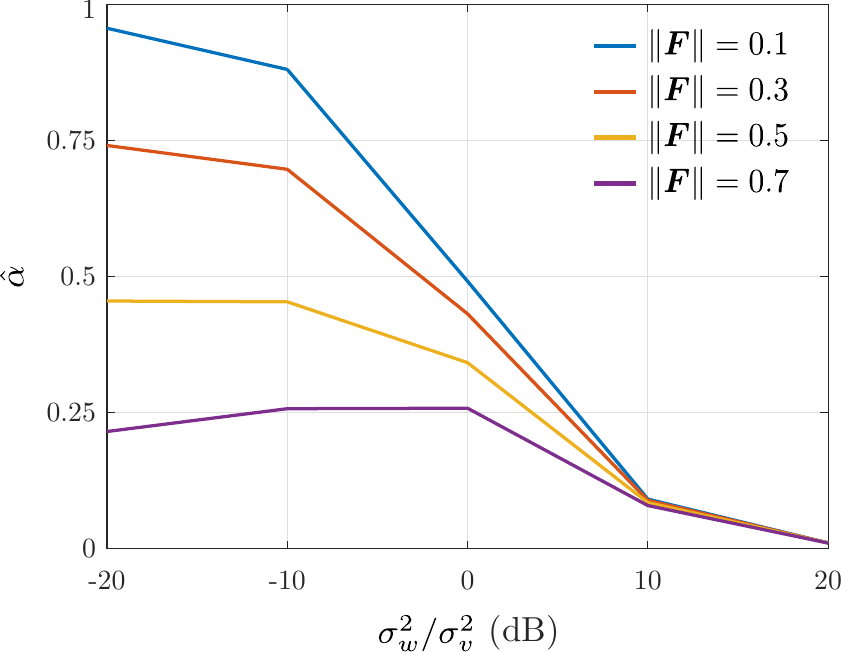}
	\caption{$\alpha$-supermodularity bounds~(Theorem~\ref{T:nearOptimalFiltering}) for different process noise variances and~$\norm{\bF}_2$.}
	\label{F:filterAlpha}
\end{figure}

Theorem~\ref{T:nearOptimalFiltering} shows that greedy sensor selection for state filtering is near-optimal. In fact, it gives explicit bounds on the suboptimality of greedy sensing sets as a function of parameters of the dynamical system. When we do not allow violations of the cardinality constraint, i.e., for~$r = s$, and~\eqref{E:alphaFiltering} is close to one or~\eqref{E:epsilonFiltering} is close to zero, the guarantees from Theorem~\ref{T:nearOptimalFiltering} approach the~$(1-1/e)$-optimality of greedy supermodular minimization.

To see when this occurs specifically, notice that~\eqref{E:alphaFiltering} can be rewritten using the Riccati equation~\eqref{E:Pk} as
\begin{equation*}
	\hat{\alpha}_k = \frac{
		\lmin\left[ \bP_{m+k \vert m+k-1}^{-1} \right]
	}{
		\lmax\left[ \bP_{m+k}^{-1}(\calO) \right]
	}
	= \frac{
		\lmin\left[ \bP_{m+k}(\calO) \right]
	}{
		\lmax\left[ \bP_{m+k \vert m+k-1} \right]
	}
		\text{,}
\end{equation*}
i.e., as the ratio between the \emph{a posteriori} error using all sensors and the \emph{a priori} error. Hence, $\alpha_j$ in~\eqref{E:alphaFiltering} is large when the \emph{a priori} and \emph{a posteriori} error covariance matrices are similar. To get additional insight, consider the particular case in which~$\bPi_0 = \bR_w = \sigma_w^2 \bI$ and~$\bR_v = \sigma_v^2 \bI$. Also, suppose that each state is measured directly, i.e., that~$\calO = \{1,\dots,p\}$ and that~$\bH_u = \be_u^T$, $u \in \calO$, where~$\be_u$ is the $u$-th column of a~$p \times p$ identity matrix. Then, from~\eqref{E:priorPk} and~\eqref{E:Pk}, the matrices of interest take the form
\begin{subequations}
\begin{align}
	\bP_{k \vert k-1} &= \bF \bP_{k-1} \bF^T + \sigma_w^2 \bI
		\label{E:prioriKF2}
	\\
	\bP_{k}(\calO) &=
	\left(
		\bP_{k \vert k-1}^{-1} + \sigma_v^{-2} \bI
	\right)^{-1}
		\label{E:posterioriKF2}
\end{align}
\end{subequations}

Notice that for~\eqref{E:alphaFiltering} to approach one, it must be that~$\bP_{k}(\calO) \approx \bP_{k \vert k-1}$. Since~$\bP_{k \vert k-1}$ depends directly on~$\sigma_w^2$, it is clear from~\eqref{E:posterioriKF2} that this occurs when~$\sigma_v^{2} \gg \sigma_w^{2}$. Additionally, we need~$\bP_{k \vert k-1}$ to be well-conditioned. When~$\sigma_w^2$ is large, this stems directly from~\eqref{E:prioriKF2}. On the other hand, if~$\sigma_w^2 \ll \norm{\bF}$, then~$\bP_{k \vert k-1} \approx \bF \bP_{k-1} \bF^T$ and its condition number depends directly on~$\bF$, since~$\kappa(\bP_{k \vert k-1}) \leq \kappa(\bP_{k-1}) \kappa(\bF)^2$, where~$\kappa$ is the condition number with respect to the spectral norm~\cite{Horn13}. A similar argument applies to~\eqref{E:epsilonFiltering}, which in this simplified setting reduces to
\begin{equation*}
	\hat{\epsilon}_k = \sigma_v^{-2} \lmax\left(
		\bF \bP_{m+k-1} \bF^T + \sigma_w^2 \bI \right)^{2}
		\text{.}
\end{equation*}
Hence, similar conditions as above also guarantee that~$\hat{\epsilon}$ is small. In particular, this occurs when~$\sigma_v^{2}$ is much larger than~$\sigma_w^{2}$ and~$\norm{\bF}$.

\begin{figure}[tb]
	\centering
	\includesvg{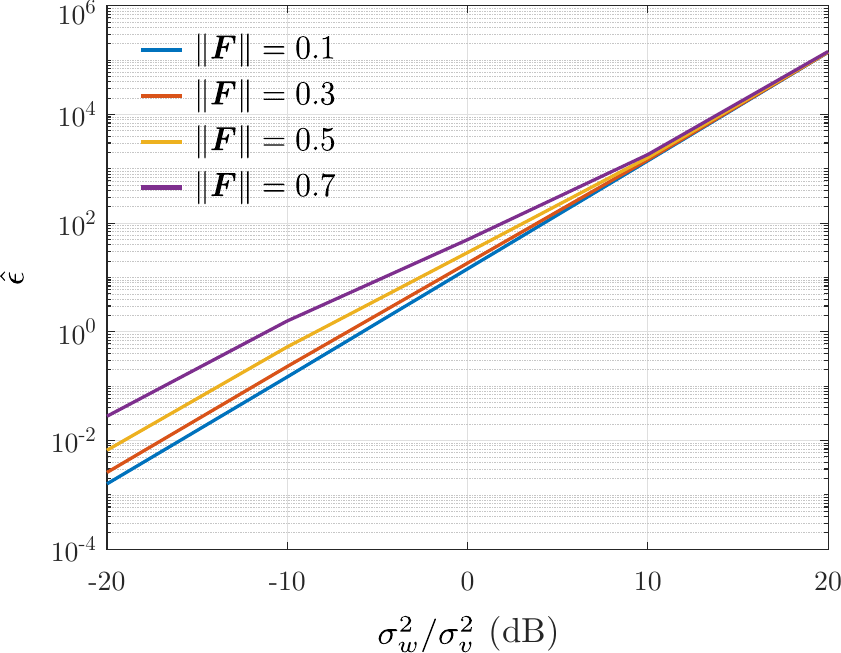}
	\caption{$\epsilon$-supermodularity bounds~(Theorem~\ref{T:nearOptimalFiltering}) for different process noise variances and~$\norm{\bF}_2$.}
	\label{F:filterEpsilon}
\end{figure}

Figures~\ref{F:filterAlpha} and~\ref{F:filterEpsilon} illustrate these observations by showing the values of~$\hat{\alpha}$ and~$\hat{\epsilon}$, averaged over~$100$ dynamical systems realizations, for different values of~$\sigma_w^{2}/\sigma_v^{2}$ and~$\norm{\bF}$~(see Section~\ref{S:Sims} for details). Indeed, the guarantees from Theorem~\ref{T:nearOptimalFiltering} are stronger when~$\sigma_v^{2} \gg \sigma_w^{2}$, $\sigma_v^{2} \gg \norm{\bF}$, and when the system has modes with similar rates~($\kappa(\bF) \approx 1$). It is important to highlight that this is the scenario in which KFs are most useful: if the process noise dominates the estimation error, the system trajectory is mostly random and the choice of sensing subset has little impact on the estimation performance~\cite{Kailath00l, Grewal2014k}. Also notice from Figure~\ref{F:filterAlpha} that for~$\sigma_v^2 > 10\sigma_w^2$ and~$\norm{\bF} = 0.3$, we can obtain supermodular-like guarantees for greedy sensor selection by violating the cardinality constraint by~$35\%$. Since guarantees for problem~\eqref{P:eig} are additive, meaningful performance bounds are obtained on a narrower range of parameters~(Figure~\ref{F:filterEpsilon}).

\begin{remark}

In~\cite{Ye18c}, it was shown that it is impossible to obtain a non-trivial, universal~(system-independent) bound on~$\hat{\alpha}$ unless~$\text{P} = \text{NP}$. To do so, they provide a reduction from the problem of \emph{exact cover by 3-sets}~(X3C) that relies on a dynamical system with noiseless outputs. Notice that this is in line with the results from Theorem~\ref{T:nearOptimalFiltering} given that~$\hat{\alpha} \to 0$ as~$\sigma_v^2 \to 0$, i.e., the guarantees based on approximate supermodularity become vacuous for noise-free outputs. When this is not the case, however, Theorem~\ref{T:nearOptimalFiltering} provides non-trivial near-optimal certificates for greedy sensor selection as a function of the dynamical system parameters.

\end{remark}

\begin{figure}[tb]
	\centering
	\includesvg{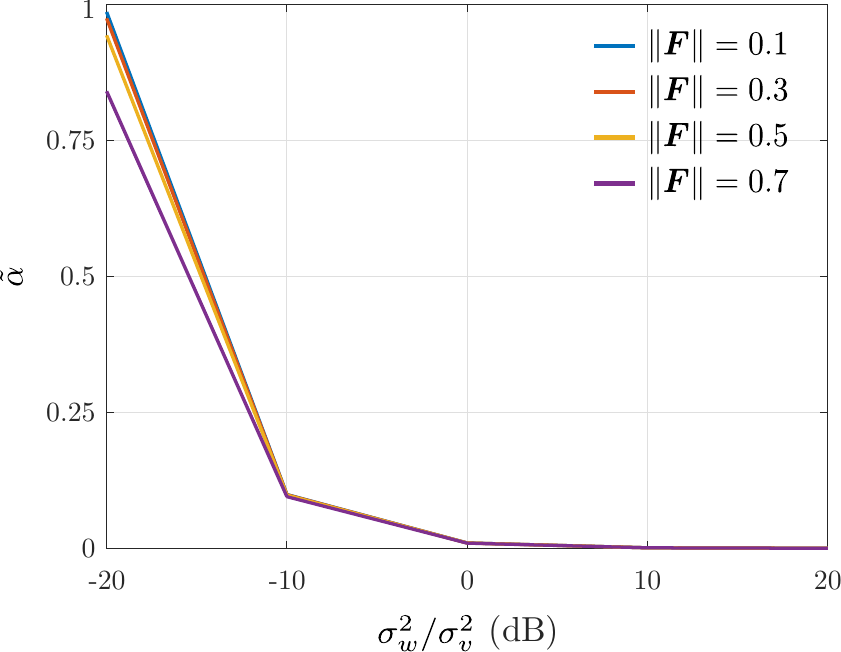}
	\caption{$\alpha$-supermodularity bounds~(Theorem~\ref{T:nearOptimalSmoothing}) for different process noise variances and~$\norm{\bF}_2$.}
	\label{F:smoothAlpha}
\end{figure}

\subsection{Greedy sensor selection for smoothing is near-optimal}
\label{S:greedySmoothing}

We obtain similar results for smoothing by taking the values of~$\bM_{\emptyset,k}$ and~$\bM_{u,k}$ from Proposition~\ref{T:smoothing}.

\begin{theorem}\label{T:nearOptimalSmoothing}

Let~$\bY_k(\calA) = \bQ_k(\calA)$, the smoothing error covariance matrix from Proposition~\ref{T:smoothing}, in~\eqref{P:trace}--\eqref{P:eig} and take~$\widetilde{\calT}^\star$ and~$\widetilde{\calS}^\star$ to be their respective solutions. Let~$\widetilde{\calT}_g$ and~$\widetilde{\calS}_g$ be the~$r$-elements greedy solutions obtained by using Algorithm~\ref{A:GreedyAlgorithm} on~\eqref{P:trace} and~\eqref{P:eig}. Then,
\begin{align}
	f_T(\widetilde{\calT}_g) &\leq (1 - e^{-\tilde{\alpha} r/s})
		f_T(\widetilde{\calT}^\star)
	\\
	f_S(\widetilde{\calS}_g) &\leq (1 - e^{-r/s})
		\left[ f_S(\widetilde{\calS}^\star)
		+ s \cdot \tilde{\epsilon} \right]
		\label{E:smoothingGuarantee}
\end{align}
for~$f_T$ and~$f_S$ the objectives of~\eqref{P:trace} and~\eqref{P:eig} respectively, $\tilde{\alpha} \geq \min_{k \leq N-1} \tilde{\alpha}_k$ and~$\tilde{\epsilon} \leq \sum_{k = 0}^{N-1} \theta_k \tilde{\epsilon}_k$ with
\begin{align}
	\tilde{\alpha}_k &\triangleq
		\frac{
			1/\ell_\textup{max}
		}{
			\lmax\left[ \bC^{-1}
			+ \sum_{u \in \calO} \bPhi_{m+k}^T
			\left( \bI \kron \bV_u \right)
			 \bPhi_{m+k} \right]
		}
		\label{E:alphaSmoothing}
	\\
	\tilde{\epsilon}_k &\triangleq \lmax\left[ \sum_{u \in \calO}
		\bPhi_{m+k}^T \left( \bI \kron \bV_u \right)
		\bPhi_{m+k} \right] \ell_\textup{max}^2
		\label{E:epsilonSmoothing}
\end{align}
where~$\bV_u = \bH_u^T \bR_{v,u}^{-1} \bH_u$, $\bC = \blkdiag(\bPi_0,\bI \kron \bR_w)$, and~$\ell_\textup{max} = \max\left[ \lmax(\bPi_0), \lmax(\bR_w) \right]$.

\end{theorem}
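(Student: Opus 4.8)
The plan is to derive Theorem~\ref{T:nearOptimalSmoothing} as a specialization of the generic machinery already assembled, exactly paralleling the proof of Theorem~\ref{T:nearOptimalFiltering} but with the smoothing data. Concretely, Corollaries~\ref{T:alphaBoundCombination} and~\ref{T:epsilonBoundCombination} already reduce everything to evaluating the two spectral quantities $\lmin[\bM_{\emptyset,m+k}]$ and $\lmax[\bM_{\emptyset,m+k}+\sum_{u\in\calO}\bM_{u,m+k}]$, so the task is simply to plug in the $\bM_{\emptyset,k}$ and $\bM_{u,k}$ supplied by Proposition~\ref{T:smoothing}.

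First I would check the structural hypotheses of Theorems~\ref{T:alphaGreedy} and~\ref{T:epsilonGreedy}. The objective of~\eqref{P:sensorSelection} is normalized by construction (the constant $C_\emptyset$). Taking $\bY_k=\bQ_k$ with $\bM_{\emptyset,k}=\bC^{-1}$, $\bC=\blkdiag(\bPi_0,\bI\kron\bR_w)$, and $\bM_{u,k}=\bPhi_k^T(\bI\kron\bV_u)\bPhi_k$ from~\eqref{E:smoothingM} places $\bQ_k$ in the template~\eqref{E:errorCovariance}: since $\bPi_0\succ 0$ and $\bR_w\succ 0$ we have $\bC\succ 0$, hence $\bM_{\emptyset,k}\succ 0$, and each $\bM_{u,k}\succeq 0$. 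Lemma~\ref{T:monotonicity} then gives that $\bQ_{m+k}$ is monotone decreasing in the PSD cone, and combined with $\theta_k\geq 0$ the trace and spectral-norm scalarizations $f_T$ and $f_S$ are normalized, monotone decreasing, nonpositive set functions. Approximate supermodularity is exactly Theorems~\ref{T:alphaBound} and~\ref{T:epsilonBound}; Lemma~\ref{T:preserveSM}\,(i) transports the trace bound across the weighted horizon sum to give $\tilde\alpha=\min_{k\le N-1}\tilde\alpha_k$, and Lemma~\ref{T:preserveSM}\,(ii) transports the spectral-norm bound to give $\tilde\epsilon=\sum_{k=0}^{N-1}\theta_k\tilde\epsilon_k$, with $\tilde\alpha_k,\tilde\epsilon_k$ the per-time-step bounds from Corollaries~\ref{T:alphaBoundCombination} and~\ref{T:epsilonBoundCombination} evaluated at index $m+k$.

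The only actual computation is the spectrum of $\bM_{\emptyset,m+k}=\bC^{-1}$. The key observation is that $\bC$ is block diagonal, so its eigenvalues are those of $\bPi_0$ together with copies of those of $\bR_w$; hence $\lmax(\bC)=\max[\lmax(\bPi_0),\lmax(\bR_w)]=\ell_\textup{max}$ and $\lmin[\bM_{\emptyset,m+k}]=\lmax(\bC)^{-1}=1/\ell_\textup{max}$. Substituting this into~\eqref{E:alphaTrace} yields the numerator $1/\ell_\textup{max}$ and the denominator $\lmax[\bC^{-1}+\sum_{u\in\calO}\bPhi_{m+k}^T(\bI\kron\bV_u)\bPhi_{m+k}]$ of~\eqref{E:alphaSmoothing}; substituting $\lmin[\bM_{\emptyset,m+k}]^2=1/\ell_\textup{max}^2$ and $\lmax(\sum_{u\in\calO}\bM_{u,m+k})=\lmax[\sum_{u\in\calO}\bPhi_{m+k}^T(\bI\kron\bV_u)\bPhi_{m+k}]$ into~\eqref{E:epsilonEig} yields~\eqref{E:epsilonSmoothing}. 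Finally, feeding $\tilde\alpha$ into~\eqref{E:alphaRelOpt} of Theorem~\ref{T:alphaGreedy} and $\tilde\epsilon$ into~\eqref{E:epsilonRelOpt} of Theorem~\ref{T:epsilonGreedy} produces the two displayed bounds on $f_T(\widetilde{\calT}_g)$ and $f_S(\widetilde{\calS}_g)$.

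I expect no real obstacle beyond bookkeeping. The one point to handle with care is that, unlike the filtering case where $\bM_{u,k}=\bV_u$ is time invariant, here $\bM_{u,k}=\bPhi_k^T(\bI\kron\bV_u)\bPhi_k$ depends on $k$ (and even grows in dimension with $k$); but Corollaries~\ref{T:alphaBoundCombination} and~\ref{T:epsilonBoundCombination} are already stated for time-varying $\bM_{u,k}$, so nothing new is needed and one simply keeps the index $m+k$ explicit throughout. The positivity assumptions $\bPi_0\succ 0$, $\bR_w\succ 0$ are what guarantee $\ell_\textup{max}<\infty$ and $\tilde\alpha_k>0$, $\tilde\epsilon_k<\infty$, so those hypotheses should be invoked explicitly.
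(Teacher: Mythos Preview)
Your proposal is correct and follows exactly the approach the paper intends: the paper does not give a separate proof of Theorem~\ref{T:nearOptimalSmoothing}, simply stating that one ``obtain[s] similar results for smoothing by taking the values of~$\bM_{\emptyset,k}$ and~$\bM_{u,k}$ from Proposition~\ref{T:smoothing},'' which is precisely your plan of inserting~\eqref{E:smoothingM} into Corollaries~\ref{T:alphaBoundCombination} and~\ref{T:epsilonBoundCombination} and then invoking Theorems~\ref{T:alphaGreedy} and~\ref{T:epsilonGreedy}. Your added observation that the block-diagonal structure of~$\bC$ yields~$\lmin(\bC^{-1})=1/\ell_\textup{max}$ is the only computation needed, and it is correct.
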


Theorem~\ref{T:nearOptimalSmoothing} shows that greedy sensor selection is also near-optimal for state smoothing. The suboptimality guarantees, however, vary in strength depending on the parameters of the dynamical system. Nevertheless, as with the filtering case from Section~\ref{S:greedyFiltering}, the certificates improve in typical practical settings. For simplicity, let us again analyze the case where~$\bPi_0 = \bR_w = \sigma_w^2 \bI$, $\bR_v = \sigma_v^2 \bI$, and~$\bH_j = \be_j^T$, for~$j \in \calO = \{1,\dots,p\}$. Immediately, $\ell_\text{max} = \sigma_w^2$, $\bC = \sigma_w^2 \bI$, and~\eqref{E:alphaSmoothing} simplifies to
\begin{equation}\label{E:alphaSmoothingSimple}
	\tilde{\alpha}_k = \frac{1}{
		1 + \sigma_w^{2} / \sigma_v^{2}
			\lmax\left(\bPhi_{m+k}^T \bPhi_{m+k} \right)
	}
		\text{.}
\end{equation}

Notice that a noise ratio appears explicitly in the denominator of~\eqref{E:alphaSmoothingSimple}. Hence, as for filtering, it is ready that~$\tilde{\alpha} \approx 1$ when~$\sigma_v^2 \gg \sigma_w^2$. Moreover, $\tilde{\alpha}_j$ depends on the system dynamics through the matrix~$\bPhi$. In fact, this dependence can be made explicit by bounding the norm of~$\bPhi$ in terms of its blocks. Explicitly, $\lmax\left(\bPhi_{m+k}^T \bPhi_{m+k} \right) = \norm{\bPhi_{m+k}} \leq \sqrt{\sum_{j = 0}^{m+k} \norm{\bF}^{2j}}$. Hence, stronger guarantees are obtained when~$\sigma_v^2 \gg \norm{\bF}$. Once again, similar arguments hold for~\eqref{E:epsilonSmoothing}, which reduces to
\begin{equation}\label{E:epsilonSmoothingSimple}
	\tilde{\epsilon}_k = \sigma_w^4 / \sigma_v^{2}
		\lmax\left[ \bPhi_{m+k}^T \bPhi_{m+k} \right]
		\text{.}
\end{equation}

We illustrate the values of~$\tilde{\alpha}$ and~$\tilde{\epsilon}$ for different values of~$\sigma_w^{2}/\sigma_v^{2}$ and~$\norm{\bF}$ in Figures~\ref{F:smoothAlpha} and~\ref{F:smoothEpsilon}~(see Section~\ref{S:Sims} for details). As~\eqref{E:alphaSmoothingSimple} and~\eqref{E:epsilonSmoothingSimple} suggest, Theorem~\ref{T:nearOptimalSmoothing} provides better guarantees when~$\norm{\bF} \sigma_w^{2} / \sigma_v^{2} \ll 1$. In other words, when the measurement noise dominates over both the process noise and the decay rate of the system modes, a scenario of practical value in KF applications~\cite{Kailath00l, Grewal2014k}. Similar to the filtering case, Figure~\ref{F:smoothAlpha} shows that if we violate the cardinality constraints by~$35\%$, we can obtain~$(1-1/e)$-optimality for~$\norm{\bF} = 0.3$ and~$\sigma_v^2 > 75\sigma_w^2$. As in the filtering case, good performance guarantees for problem~\eqref{P:eig} are obtained for a stricter range of parameters~(Figure~\ref{F:smoothEpsilon}).

\begin{figure}[tb]
	\centering
	\includesvg{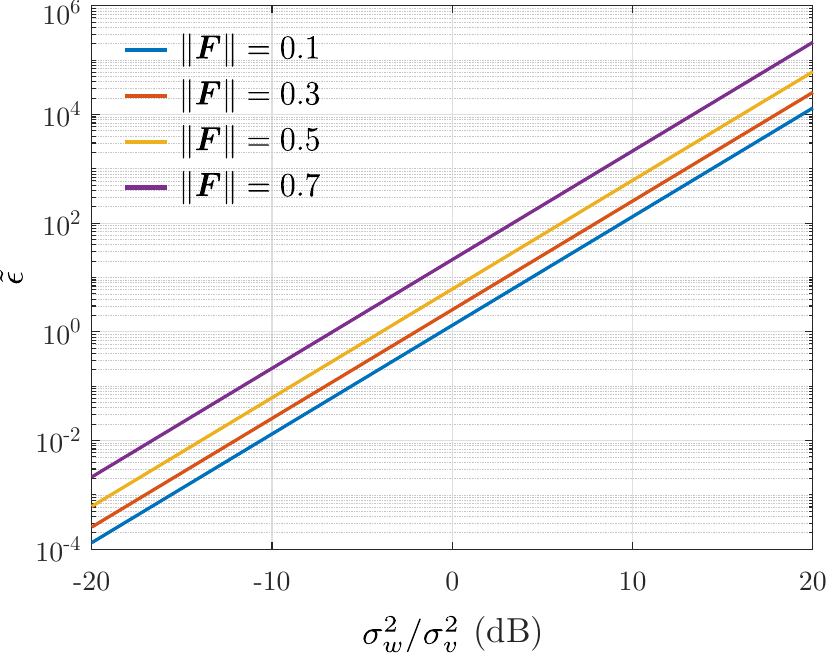}
	\caption{$\epsilon$-supermodularity bounds~(Theorem~\ref{T:nearOptimalSmoothing}) for different process noise variances and~$\norm{\bF}_2$.}
	\label{F:smoothEpsilon}
\end{figure}

\section{NUMERICAL EXAMPLES}
	\label{S:Sims}

We begin by giving details on the simulations in Figures~\ref{F:filterAlpha}--\ref{F:smoothEpsilon}. We considered dynamical systems with~$n = p = 100$ in which the elements of~$\bF$ were drawn randomly from a standard Gaussian distribution, $\bH_u = \be_u^T$, where~$\be_u \in \setR^p$ is a vector of zeros except for the~$u$-th element, which is one, $\bPi_0 = 10^{-2} \bI$, $\bR_w = \sigma_w^2 \bI$, and~$\bR_v = \bI$. The state transition matrix~$\bF$ was normalized so that its spectral norm is in~$[0.1,0.9]$ and the process noise~$\sigma_w^2$ varied in~$[0.01, 100]$. For Figures~\ref{F:filterAlpha} and~\ref{F:filterEpsilon}~(filtering), we minimize the $10$-steps average error~[$N = 10$ and~$\theta_k = 1$ in~\eqref{P:sensorSelection}]. For Figures~\ref{F:smoothAlpha} and~\ref{F:smoothEpsilon}~(smoothing), we minimize the error at the end of the window~[$N = 10$, $\theta_{9} = 1$, and~$\theta_k = 0$ for~$k < 9$ in~\eqref{P:sensorSelection}].

\begin{figure}[tb]
	\centering
	\includesvg{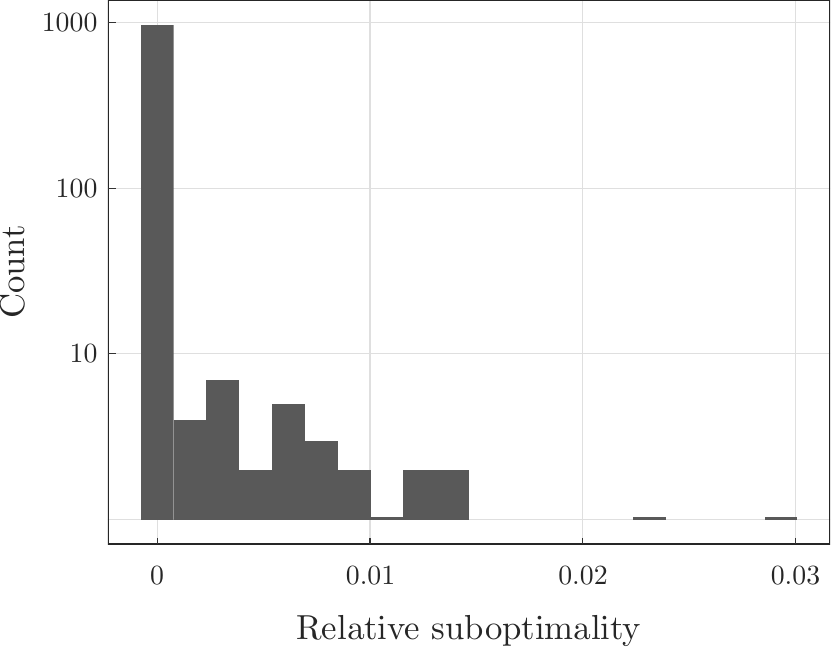}
	\caption{Relative suboptimality of greedy sensor selection using the MSE~[\eqref{P:trace}] for filtering~($1000$ system realizations).}
	\label{F:greedyMSEFilter}
\end{figure}

\begin{figure}[tb]
	\centering
	\includesvg{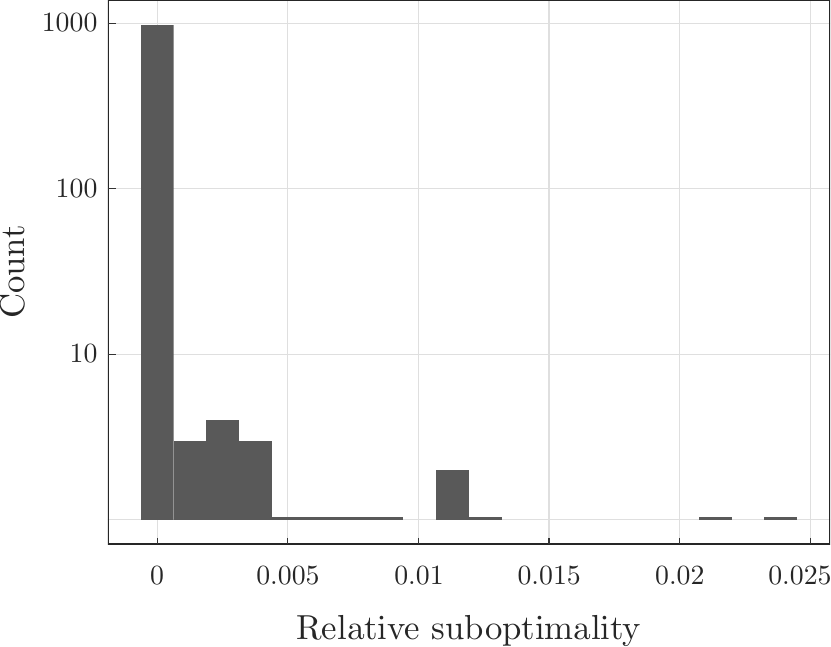}
	\caption{Relative suboptimality of greedy sensor selection using the MSE~[\eqref{P:trace}] for smoothing~($1000$ system realizations).}
	\label{F:greedyMSESmooth}
\end{figure}

Figures~\ref{F:filterAlpha}--\ref{F:smoothEpsilon}, along with Theorems~\ref{T:nearOptimalFiltering} and~\ref{T:nearOptimalSmoothing}, illustrate the wide range of parameters over which good performance certificates can be provided for greedy sensor selection. It is worth noting, however, that these are worst-case guarantees and that better results are common in practice. To illustrate this point, we evaluate the relative suboptimality of greedily selected sensing sets for both filtering and smoothing over~$1000$ system realizations. Explicitly, we evaluate
\begin{equation*}
	\nu^\star(\calG) = \frac{f(\calX^\star) - f(\calG)}{f(\calX^\star)}
		\text{,}
\end{equation*}
where~$\calG$ and~$\calX^\star$ are the greedy and optimal solutions of~\eqref{P:sensorSelection}, respectively. In all experiments, we take~$r = s$, i.e., we do not consider violations of the cardinality constraint. Since~$\nu^\star$ depends on the optimal sensing set, we restrict ourselves to small dynamical systems~[$n = 10$~states, $p = 10$~outputs, and~$s = 4$ in~\ref{P:sensorSelection}]. However, we now randomly draw the elements of both~$\bF$ and~$\bH$ from standard Gaussian distributions and normalize~$\bF$ so that its norm is~$0.9$. We keep~$\sigma_w^2 = 10^{-2}$ fixed, but draw the measurement noise variances at each output uniformly at random from~$[10^{-2},1]$.

\begin{figure}[tb]
	\centering
	\includesvg{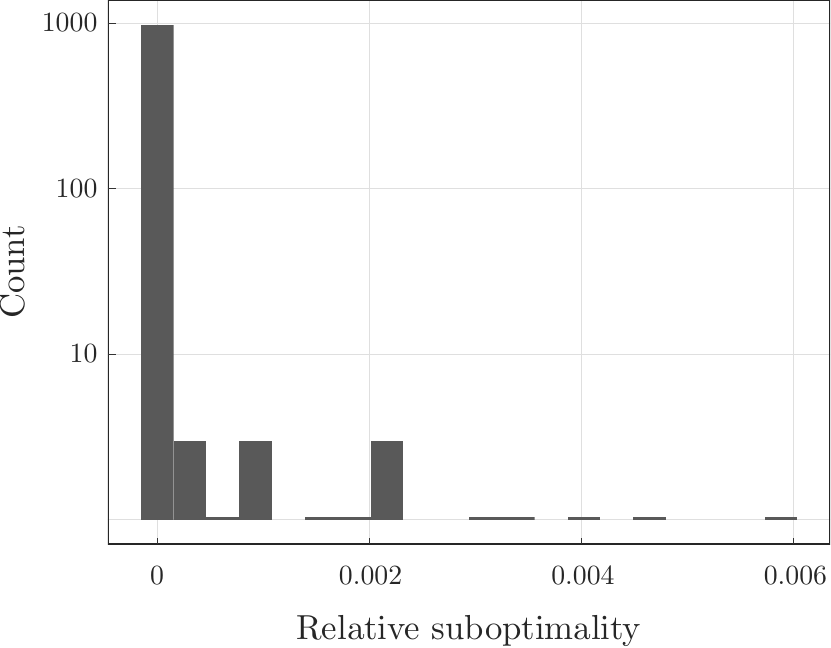}
	\caption{Relative suboptimality of greedy sensor selection using the spectral norm~[\eqref{P:eig}] for filtering~($1000$ system realizations).}
	\label{F:greedyNormFilter}
\end{figure}

\begin{figure}[tb]
	\centering
	\includesvg{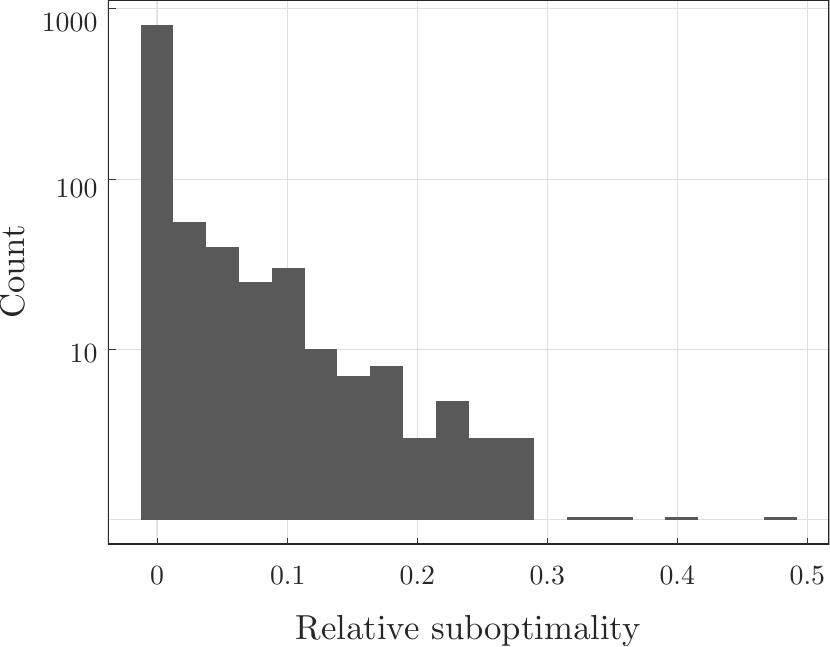}
	\caption{Relative suboptimality of greedy sensor selection using the spectral norm~[\eqref{P:eig}] for smoothing~($1000$ system realizations).}
	\label{F:greedyNormSmooth}
\end{figure}

In Figures~\ref{F:greedyMSEFilter} and~\ref{F:greedyMSESmooth}, we show the value of~$\nu^\star$ for the result of greedily solving problem~\eqref{P:trace} for filtering and smoothing. Notice that the values obtained are considerably lower than the guarantee given in Theorems~\ref{T:nearOptimalFiltering} and~\ref{T:nearOptimalSmoothing}. In fact, they are lower than the~$1-e^{-1} \approx 0.63$ bound for supermodular functions. For both problem, greedy sensor selection found the optimal sampling set in over~$96\%$ of the realizations.

In Figures~\ref{F:greedyNormFilter} and~\ref{F:greedyNormSmooth}, we show results for problem~\eqref{P:eig}. To make sure the system is observable over the time window selected, we use a smaller system~[$n = 5$ states, $p = 10$ outputs, and~$s = 5$ in~\eqref{P:sensorSelection}]. Moreover, since the guarantees for the greedy solution of~\eqref{P:eig} are additive and therefore not as strong as those for~\eqref{P:trace}, we take~$\sigma_w^2 = 10^{-3}$. For the filtering problem, the results are similar to those observed in Figure~\ref{F:greedyMSEFilter} for minimizing the MSE. In over~$98\%$ of the realizations, greedy sensor selection actually obtained the optimal sensing set. On the other hand, the smoothing problem is more challenging for the worst-case error and the results obtained are poorer. This is especially due to the fact that~$f(\widetilde{\calS}^\star) \ll s \cdot \tilde{\epsilon}$ in~\eqref{E:smoothingGuarantee} for this example, so that the guarantees we obtain are less strict. Still, greedy sensor selection found the optimal sensing set in~$77\%$ of the realizations.

\begin{figure}[tb]
	\centering
	\includesvg{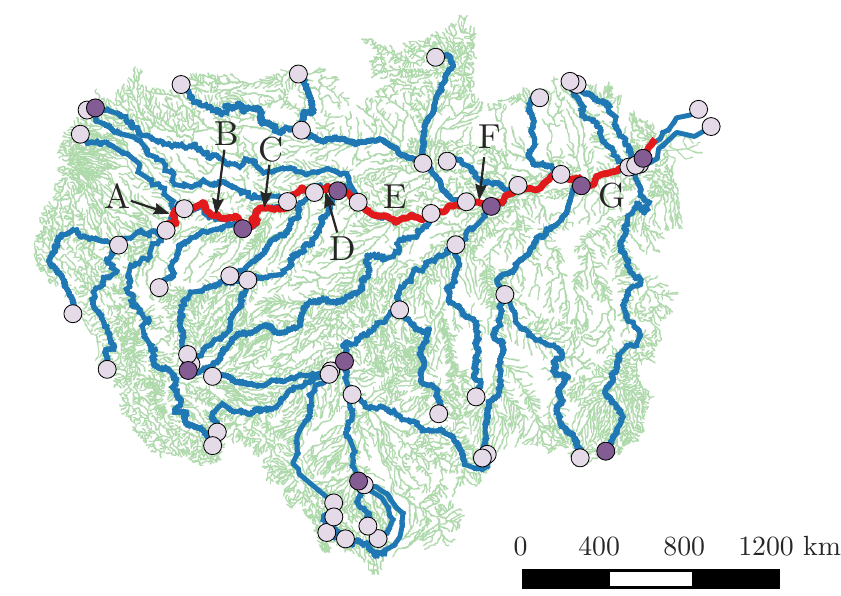}
	\caption{Amazon basin, sensor pool~(light circles), and greedy sensing set~(dark circles).}
	\label{F:amazon}
\end{figure}

\subsection{Application: sensing for the Amazon basin}

We conclude these numerical examples by illustrating the use of greedy sensor selection for water monitoring in the Amazon basin. The Amazon drainage basin, showed in green in Figure~\ref{F:amazon}, covers $7.5$~million~$\text{km}^2$ and is composed of over~$7000$ tributaries. In our experiments, we use a simplified network of the basin~(blue curve in Figure~\ref{F:amazon}) obtained by smoothing the original map~\cite{Muller99u}.

Using this river network, we construct a weighted directed tree~$\calG = (\calV,\calE)$. The set of vertices~$\calV$ is composed of the possible sensor positions shown in Figure~\ref{F:amazon} and an additional node midway between each sensor~($\abs{\calV} = 127$). The set of edges~$\calE$ is composed of ordered pairs from~$\calV \times \calV$ such that~$(i,j) \in \calE$ if water flows from node~$i$ to node~$j$. We assume all river flow toward the ocean. The adjacency matrix of~$\calG$ is then given by
\begin{equation}\label{E:amazonAdjacency}
	[\bA]_{ij} =
	\begin{cases}
		\exp\left( \norm{\bz_i - \bz_j}^2 / \sigma^2 \right)
			\text{,} &\text{if } (i,j) \in \calE
		\\
		0
			\text{,} &\text{otherwise}
	\end{cases}
\end{equation}
where~$\bz_i \in \setR^2$ is the position of node~$i$ on the map and~$\sigma^2 = 10$ is a smoothness parameter. We also define its Laplacian as~$\bL = \bD - \bA$, where~$\bD$ is a diagonal matrix whose elements are the sum of the columns of~$\bA$, and its symmetrized Laplacian as~$\bL^\prime = \bD^\prime - (\bA + \bA^T)$, where~$\bD^\prime$ is a diagonal matrix whose elements are the sum of the columns of~$\bA + \bA^T$.

\begin{figure}[tb]
	\centering
	\includesvg{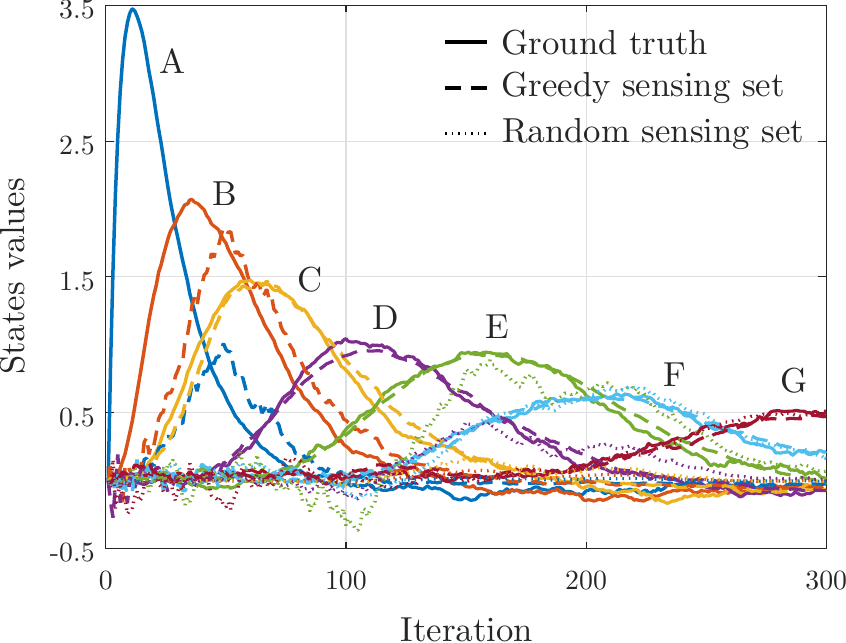}
	\caption{Estimated unobserved states along the Amazon river. States positions are indicated in Figure~\ref{F:amazon}.}
	\label{F:application}
\end{figure}

Using~$\bA$ from~\eqref{E:amazonAdjacency} and the Laplacians defined above, we construct a dynamical system whose states~$\bx_k$ evolve according to~\eqref{E:dynamics} with
\begin{equation}\label{E:amazonDynamics}
	\bF = 0.9 \exp(-\bL \Delta t) + 0.099 \exp(- \bL^\prime \Delta t)
		\text{,}
\end{equation}
where~$\Delta t = 0.1$ is the sampling period. The dynamics in~\eqref{E:amazonDynamics} are a combination of two processes: the first term corresponds to the advection process by which water flows to the ocean and the second term corresponds to a diffusion process. The combination coefficients are chosen so that the system is marginally stable~($\norm{\bF} < 1.02$). The process noise~$\bw_k$ is drawn from a zero-mean Gaussian random variable with covariances~$\bR_w = \sigma_w^2 \bI$, for~$\sigma_w^2 = 10^{-4}$. The true initial state of the system is a spike at the beginning of the Amazon river~(the red curve in Figure~\ref{F:amazon}), i.e., $\bx_0$ is a vector of zeros except at position~$51$, where it is~$10$. For the KF, however, we assume the initial state is a zero-mean Gaussian random variable with covariance~$\bPi_0 = \bI$.

We greedily select~$s = 10$ sensors~(dark circles in Figure~\ref{F:amazon}) from a pool of~$p = 64$ sensors whose positions are shown in Figure~\ref{F:amazon}. The sensors observe a noisy version of their respective states, i.e., $y_{u,k} = e_u^T \bx_k + v_{u,k}$ for~$u \in \calO$, where~$\be_u$ is a vector of zeros except for the $u$-th element that is one and $v_{u,k}$ is a zero-mean Gaussian random variable independent of~$v_{w,\ell}$ for all~$w \neq u$ or~$\ell \neq k$ with variance~$\sigma_v^2 = 10^{-1}$.

The state trajectories estimated using Kalman filtering are shown in Figure~\ref{F:application}. We show only the values at~$7$ unobserved portions along the Amazon river~(indicated in Figure~\ref{F:amazon}). We compare the performance of a greedy sensing set selected to optimize the average~$200$-steps MSE and a sensing set containing one randomly selected sensor for each of the first ten levels of the Amazon basin tree. Notice from Figure~\ref{F:application} that by portion~$C$, the results using the greedy sensing set track closely the true states of the system, whereas the random sensing only detects the change in the water on the second half of the river~(position~$E$). In terms of error, the average~$200$-steps MSE for a full sensing solution~($\calX = \calO$) is~$185.67$. The greedy sensing set, which contains only~$15\%$ of the possible sensors, achieves a cost of~$1005.03$, an almost~$50\%$ reduction over the~$1978.22$ obtained by the random set. Since the modes of this system have similar decays~(the condition number of~$\bF$ is~$1.3$), the sensing set obtain by greedily solving~\eqref{P:logdet} differs in a single sensor from that obtained using~\eqref{P:trace}. Thus, they achieve similar MSE performances. As discussed in Remark~\ref{R:logdet}, this is not necessarily the case in general.

\section{CONCLUSION}

This work studied the filtering/smoothing sensor selection problems for both the MSE and the worst-case error and provided near-optimal guarantees to their greedy solutions. To do so, it relied on two concepts of approximate supermodularity, $\alpha$- and $\epsilon$-supermodularity, and derived approximation bounds for the greedy minimization of these functions. By bounding~$\alpha$ and~$\epsilon$ for the MSE and the spectral norm of the error covariance matrix, it showed that filtering/smoothing sensor selection problem can be solved near-optimally. In fact, in typical application scenarios, the guarantees approach the~$1-1/e$ certificate of supermodular functions. These results justify the use of greedy sensor selection without the need for surrogate cost functions, such as the~$\log\det$. We expect that other cost functions have similar approximate supermodular behavior and that this theory can be used to give approximation certificates for other control problems. Of particular interest is a formulation that takes the controller inputs into account when designing the sensing set. The underlying theory developed here could also potentially be applied to problems involving joint state/input estimation and/or input and state constraints.

\bibliographystyle{IEEEbib}
\bibliography{IEEEabrv,gsp,sp,math,control,stat}

\appendix

\section*{Proof of Proposition~\ref{T:filtering}}

\begin{proof}

The filtered estimate~$\bxh_k(\calX)$ can be obtained from a previous estimate~$\bxh_{k-1}$ using the KF iteration
\begin{equation}\label{E:measKF}
	\bxh_{k}(\calX) = \bF \bxh_{k-1} + \sum_{u \in \calX}
		\bK_{u,k} \left[ \by_{u,k} - \bH_u \bF \bxh_{k-1} \right]
		\text{,}
\end{equation}
where~$\bK_{u,k} = \bP_{k \vert k-1} \bH_u^T \bE_{u,k}^{-1}$ is the Kalman gain of measurement~$u \in \calO$, with~$\bE_{u,k} = \bH_u \bP_{k \vert k-1} \bH_u^T + \bR_{v,u}$ denoting the innovation covariance matrix and
\begin{equation}\label{E:priorPk}
	\bP_{k \vert k-1} = \bF \bP_{k-1} \bF^T + \bR_w
\end{equation}
denoting the \emph{a priori} error covariance matrix. Then, $\bP_k(\calX)$ is obtained directly from the KF recursion as
\begin{equation}\label{E:Pk}
	\bP_{k}(\calX) = \left[ \bP_{k \vert k-1}^{-1} +
		\sum_{u \in \calX} \bH_u^T \bR_{v,u}^{-1} \bH_u \right]^{-1}
		\text{,}
\end{equation}
where~$\bP_{k-1} = \E \left[ \bxh_{k-1} - \bx_{k-1} \right] \left[ \bxh_{k-1} - \bx_{k-1} \right]^T$ is the error covariance matrix of the state estimate and~$\bP_{0 \vert -1} = \bPi_0$~\cite{Kailath00l}.
\end{proof}

\section*{Proof of Proposition~\ref{T:smoothing}}

\begin{proof}

Instead of relying on a Kalman smoother, we obtain the error covariance matrix by analyzing this problem in the batch setting. Note that both methods solve the same problem and therefore lead to the same solution. Proceeding as in~\cite{Tzoumas16s}, note from~\eqref{E:dynamics} that estimating~$\bxb_k$, i.e., all states~$\bx_j$ for~$j \leq k$, is equivalent to estimating the first state~$\bx_{0}$ and the process noises~$\left\{ \bw_j \right\}_{j \leq k-1}$. Indeed, we can recover the smoothed states using~$\bxt_k = \bPhi_k \bzb_k$, for~$\bzb_k = \vect{cccc}{\bx_{0}^T & \bw_{0}^T & \cdots & \bw_{k-1}^T}^T$ and~$\bPhi_k$ as in~\eqref{E:phik}.

The estimation of~$\bzb_k$ can be cast as a stochastic estimation problem by lifting. Formally, using the stacked~$\byb_{u,k} = \vect{ccc}{\by_{u,0}^T & \cdots & \by_{u,k}^T}^T$
and~$\bvb_{u,k} = \vect{ccc}{\bv_{u,0}^T & \cdots & \bv_{u,k}^T}^T$, it is straightforward that
\begin{equation}\label{E:yGlobal}
	\vect{c}{\byb_{u_1,k} \\ \vdots \\ \byb_{u_s,k}} =
		\underbrace{\vect{c}{\bI \kron \bH_{u_1} \\ \vdots \\ \bI \kron \bH_{u_s}} \bPhi_k}_{\bO_k(\calX)}
		\bzb_k
			+ \vect{c}{\bvb_{u_1,k} \\ \vdots \\ \bvb_{u_s,k}}
		\text{,}
\end{equation}
where~$\calX = \{u_1, \dots, u_s\}$ is the observed output subset and~$\kron$ denotes the Kronecker product. The minimum MSE incurred from estimating~$\bzb$ from~$\byb$ is then given by the trace of
\begin{equation}\label{E:placeK}
	\bK(\calX) = \left[ \bC^{-1} +
		\bO_k(\calX)^T \blkdiag\left( \bI \kron \bR_{v,u} \right)^{-1}
			\bO_k(\calX) \right]^{-1}
		\text{,}
\end{equation}
for~$u \in \calX$, $\bO_k$ in~\eqref{E:yGlobal}, $\bC = \blkdiag(\bPi_{0},\bI \kron \bR_w)$, and $\blkdiag(\bX,\bY)$ denoting the block diagonal matrix whose diagonal blocks are~$\bX$ and~$\bY$~\cite{Kailath00l}.

To obtain the form in~\eqref{E:errorCovariance}, use the fact that~$\blkdiag(\bA_i)^{-1} = \blkdiag(\bA_i^{-1})$ and the mixed product property of the Kronecker product~\cite{Horn13} to get
\begin{multline}\label{E:preMSE}
	\bO_k(\calX)^T \blkdiag\left( \bI \kron \bR_{v,u} \right)^{-1} \bO_k(\calX) ={}
	\\
	\sum_{u \in \calX}
		\bPhi_k^T \left( \bI \kron \bH_{u}^T \bR_{v,u}^{-1} \bH_{u} \right) \bPhi_k
		\text{.}
\end{multline}
Substituting~\eqref{E:preMSE} in~\eqref{E:placeK} yields~\eqref{E:smoothingM}.
\end{proof}

\section*{Proof of Theorem~\ref{T:alphaGreedy}}

\begin{proof}
Since~$f$ is monotone decreasing, it holds for every set~$\calG_j$ that
\begin{align}
	f(\calX^\star) &\geq f(\calX^\star \cup \calG_j)
	\notag\\
	{}&= f(\calG_j) -
		\sum_{i = 1}^{s} \left[ f(\calT_{i-1}) -
			f(\calT_{i-1} \cup v_i^\star) \right]
		\label{E:greedyTelescopic}
		\text{,}
\end{align}
where~$\calT_{0} = \calG_j$, $\calT_{i} = \calG_j \cup \{ v_{1}^\star, \dots, v_{i}^\star \}$, $i = 1,\dots,s$, and $v_i^\star$ is the~$i$-th element of~$\calX^\star$. Notice that this holds regardless of the order in which the~$v_i^\star$ are taken. Since~$f$ is~$\alpha$-supermodular and~$\calG_j \subseteq \calT_i$ for all $i$, the incremental gains in~\eqref{E:greedyTelescopic} can be bounded using~\eqref{E:supermodularity} to get
\begin{equation}\label{E:preGreedyRecursion}
	f(\calX^\star) \geq f(\calG_j) -
		\alpha^{-1} \sum_{i = 1}^{s} \left[ f(\calG_j) -
			f(\calG_j \cup v_i^\star) \right]
		\text{.}
\end{equation}
To proceed, we use the fact that~$\calG_{j+1} = \calG_j \cup \{u\}$ is constructed by the greedy procedure in~\eqref{E:greedy} so as to maximize the incremental gains in~\eqref{E:preGreedyRecursion}. It therefore holds that
\begin{equation}\label{E:greedyRecursion1}
	f(\calX^\star) \geq f(\calG_j) -
		\alpha^{-1} s \left[ f(\calG_j) - f(\calG_{j+1}) \right]
		\text{.}
\end{equation}
By adding and subtracting~$f(\calX^\star)$ in the brackets of~\eqref{E:greedyRecursion1}, \eqref{E:greedyRecursion1} yields a recursion for the distance to the optimal value~$\delta_j = f(\calX^\star) - f(\calG_j)$ given by
\begin{equation*}
	\delta_j \geq
		\alpha^{-1} s \left[ \delta_{j} - \delta_{j+1} \right]
	\Rightarrow
	\delta_{j+1} \geq \left( 1 - \frac{1}{\alpha^{-1} s} \right)
		\delta_{j}
		\text{.}
\end{equation*}
Note that since~$f$ is non-positive, $\delta_j \leq 0$ for all~$j$ by the optimality of~$\calX^\star$.

The expression in~\eqref{E:alphaRelOpt} yields directly from this recursion by noticing that since~$f$ is normalized, we have that~$\delta_0 = f(\calX^\star) - f(\emptyset) = f(\calX^\star)$. Immediately, since Algorithm~\ref{A:GreedyAlgorithm} is used for~$r$ iterations, we obtain
\begin{equation}\label{E:greedyFinalRecursion}
	f(\calX^\star) - f(\calG_r) \geq
	\left( 1 - \frac{\alpha}{s} \right)^{r}
		f(\calX^\star)
		\text{.}
\end{equation}
Using the fact that~$1 - x \leq e^{-x}$, the expression in~\eqref{E:greedyFinalRecursion} can be rearranged into~\eqref{E:alphaRelOpt}.
\end{proof}

\section*{Proof of Theorem~\ref{T:epsilonGreedy}}

\begin{proof}

Using the assumption that~$f$ is monotone decreasing, we write
\begin{align}\label{E:greedyTelescopic2}
	f(\calX^\star) &\geq f(\calX^\star \cup \calG_j)
	\notag\\
	&= f(\calG_j) - \sum_{i = 1}^{s} \left[
		f(\calT_{i-1}) - f(\calT_{i-1} \cup \{v_i^\star\})
	\right]
		\text{,}
\end{align}
where again~$\calT_{0} = \calG_j$, $\calT_{i} = \calG_j \cup \{ v_{1}^\star, \dots, v_{i}^\star \}$, for~$i = 1,\dots,s$, and~$v_i^\star$ is the $i$-th element of~$\calX^\star$. Observe that the order in which the~$v_i^\star$ occur in the telescopic sum is irrelevant. Since~$f$ is~$\epsilon$-supermodular and~$\calG_j \subseteq \calT_i$ for all~$i$, \eqref{E:epsilonSM} can be used to bound the incremental gains in~\eqref{E:greedyTelescopic2}. Explicitly,
\begin{equation*}
	f(\calX^\star) \geq f(\calG_j) -
		\sum_{i = 1}^{s} \left[ f(\calG_j) - f(\calG_j \cup \{v_i^\star\}) + \epsilon \right]
		\text{.}
\end{equation*}
Since~$\calG_{j+1} = \calG_j \cup \{u\}$ is constructed in~\eqref{E:greedy} so as to minimize~$f(\calG_{j+1})$, it holds that
\begin{equation}\label{E:greedyRecursion12}
	f(\calX^\star) \geq f(\calG_j) -
	s \left[ f(\calG_j) - f(\calG_{j+1}) + \epsilon \right]
		\text{.}
\end{equation}

Finally, a recursion for the optimality gap~$\delta_j^\prime = f(\calG_j) - f(\calX^\star)$ is obtained from~\eqref{E:greedyRecursion12} by adding and subtracting~$f(\calX^\star)$ in the brakets. Explicitly,
\begin{equation}\label{E:greedyRecursion7}
	\delta_j^\prime \leq s\left( \delta_{j}^\prime - \delta_{j+1}^\prime
		+ \epsilon \right)
	\Rightarrow
	\delta_{j+1}^\prime \leq \left( 1 - \frac{1}{s} \right) \delta_{j}^\prime
		+ \epsilon
		\text{.}
\end{equation}
Notice that since~$f$ is non-positive, $\delta_j^\prime \geq 0$ for all~$j$ due to the optimality of~$\calX^\star$. The solution of~\eqref{E:greedyRecursion7} after~$r$ steps is
\begin{equation}\label{E:greedyRecursionSolve}
	\delta_{r}^\prime \leq
	\left( 1 - \frac{1}{s} \right)^{r} \delta_{0}^\prime
	+ \epsilon \sum_{j = 0}^{r-1} \left( 1 - \frac{1}{s} \right)^j
\end{equation}
Evaluating the geometric series in~\eqref{E:greedyRecursionSolve} yields
\begin{equation*}
	\delta_{r}^\prime \leq
	\left( 1 - \frac{1}{s} \right)^{r} \delta_{0}^\prime
	+ s \left[ 1 - \left( 1 - \frac{1}{s} \right)^{r} \right]
		\epsilon
\end{equation*}
and under the assumption that~$f$ is normalized, i.e., for~$\delta_0^\prime = f(\emptyset) - f(\calX^\star) = -f(\calX^\star) \geq 0$, we obtain
\begin{equation*}
	f(\calG_r) \leq
		\left[ 1 - \left( 1 - \frac{1}{s} \right)^{r} \right]
		\left[ f(\calX^\star) + s \cdot \epsilon \right]
		\text{.}
\end{equation*}
Using once again the fact that~$1 - x \leq e^{-x}$ yields~\eqref{E:epsilonRelOpt}.
\end{proof}

\section*{Proof of Lemma~\ref{T:preserveSM}}

\begin{proof}
We proceed with the proof case-by-case:
\begin{enumerate}[(i)]

\item From the definition of $\alpha$-supermodularity in~\eqref{E:alphaSM}, for $\calA \subseteq \calB \subseteq \calO$ and~$u \in \calO \setminus \calB$ it holds that
\begin{align*}
	g\left( \calA \right) - g\left( \calA \cup \{u\} \right) &=
		\sum_i \theta_i \left[ f_i\left( \calA \right)
		- f_i\left( \calA \cup \{u\} \right) \right]
	\\
	{}&\geq
		\sum_i \alpha_i \theta_i
			\left[ f_i\left( \calB \right) - f_i\left( \calB \cup \{u\} \right) \right]
		\text{,}
\end{align*}
where the constant factors of the affine transformations cancel out. Since~$\alpha_i \geq \min(\alpha_i)$, we obtain
\begin{multline*}
	g\left( \calA \right) - g\left( \calA \cup \{u\} \right) \geq{}
	\\
	\min(\alpha_i) \sum_i \theta_i
		\left[ f_i\left( \calB \right) - f_i\left( \calB \cup \{u\} \right) \right]
		\geq
	\\
	\min(\alpha_i) \left[ g\left( \calB \right) - g\left( \calB \cup \{u\} \right) \right]
		\text{.}
\end{multline*}

\item Similarly, using the definition of $\epsilon$-supermodularity in~\eqref{E:epsilonSM}, for $\calA \subseteq \calB \subseteq \calV$ and~$u \in \calO \setminus \calB$ we obtain
\begin{multline*}
	g\left( \calA \right) - g\left( \calA \cup \{u\} \right) \geq{}
	\\
	\sum_i \theta_i \left[ f_i\left( \calB \right)
		- f_i\left( \calB \cup \{u\} \right) \right]
		- \sum_i \theta_i \epsilon_i ={}
	\\
	g\left( \calB \right) - g\left( \calB \cup \{u\} \right)
		- \sum_i \theta_i \epsilon_i
		\text{,}
\end{multline*}
where again the constant factors~$b$ canceled out.\qedhere

\end{enumerate}
\end{proof}

\section*{Proof of Lemma~\ref{T:DeltaLemma}}

\begin{proof}
Start by simplifying~$\Delta_u t(\calX) = t(\calX) - t(\calX \cup \{u\})$ using the matrix inversion lemma. To do so, write
\begin{equation}\label{E:YXu}
	\bY\left( \calX \cup \{u\} \right) = \left[ \bR(\calX) + \bM_u \right]^{-1}
		\text{,}
\end{equation}
letting once again~$\bR(\calX) = \bM_\emptyset + \sum_{u \in \calX} \bM_u$. Since~$\bR(\calX) \succ 0$, but the matrices~$\bM_u$ need not be invertible, we use an alternative form of the matrix inversion lemma~\cite{Henderson81d} to get
\begin{equation}\label{E:YMIL}
	\bY\left( \calX \cup \{u\} \right) =
		\bY(\calX) - \bY(\calX) \bM_u
			\bY\left( \calX \cup \{u\} \right)
		\text{.}
\end{equation}
where we used~\eqref{E:YXu} to write~$\bY(\calX) = \bR(\calX)^{-1}$. Using~\eqref{E:YMIL} and the linearity of the trace~\cite{Horn13}, we can write~$\Delta_u t(\calX)$ as
\begin{equation}\label{E:DeltaMIL}
	\Delta_u t(\calX) =
	\trace \left[
		\bY(\calX) \bM_u \bY\left( \calX \cup \{u\} \right)
	\right]
		\text{.}
\end{equation}

To proceed, let~$\bMt_u = \bM_u + \epsilon \bI \succ 0$ for~$\epsilon > 0$ and define the perturbed version of~\eqref{E:DeltaMIL} as
\begin{equation}\label{E:perturbedDelta}
	\widetilde{\Delta}_{u} t(\calX) = \trace \left[ \bY(\calX) \bMt_u
		\left( \bR(\calX) + \bMt_u \right)^{-1} \right]
		\text{.}
\end{equation}
Notice that~$\widetilde{\Delta}_{u} t \to \Delta_{u} t$ as~$\epsilon \to 0$. Using the invertibility of~$\bMt_u$ and~$\bR(\calX)$, we obtain
\begin{equation*}
	\widetilde{\Delta}_{u} t(\calX) = \trace \left[ \bY(\calX)
		\left( \bY(\calX) + \bMt_u^{-1} \right)^{-1} \bY(\calX) \right]
		\text{.}
\end{equation*}
Since~$\bY(\calX) \succ 0$, its square-root~$\bY(\calX)^{1/2}$ is well-defined and unique~\cite{Horn13}. We can therefore use the circular commutation property of the trace to get
\begin{equation}\label{E:preDeltaBound}
	\widetilde{\Delta}_{u} t(\calX) = \trace \left[ \bY(\calX) \bZ(\calX,u) \right]
		\text{,}
\end{equation}
with~$\bZ(\calX,u) = \bY(\calX)^{1/2} \left[ \bY(\calX) + \bMt_u^{-1} \right]^{-1} \bY(\calX)^{1/2}$. Since both matrices in~\eqref{E:preDeltaBound} are positive definite, we can use the bound from~\cite{Wang92s} to get
\begin{equation*}
	\lmin\left[ \bY(\calX) \right] \trace \left( \bZ \right) \leq
	\widetilde{\Delta}_{u} t(\calX) \leq
	\lmax\left[ \bY(\calX) \right] \trace \left( \bZ \right)
		\text{.}
\end{equation*}
Reversing the manipulations used to obtain~\eqref{E:preDeltaBound} yields
\begin{multline*}
	\lmin\left[ \bY(\calX) \right]
	\trace \left[ \bMt_u \left( \bY(\calX) + \bMt_u \right)^{-1} \right]
	\leq
	\widetilde{\Delta}_{u} t(\calX)
	\leq{}
	\\
	\lmax\left[ \bY(\calX)^{-1} \right]
	\trace \left[ \bMt_u \left( \bY(\calX) + \bMt_u \right)^{-1} \right]
		\text{.}
\end{multline*}
The result in~\eqref{E:DeltaBound} is obtained by continuity as~$\epsilon \to 0$.
\end{proof}

\begin{IEEEbiography}[{\includegraphics[width=1in,height=1.25in]{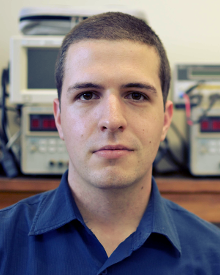}}]{Luiz F. O. Chamon (S'12)}
received the B.Sc. and M.Sc. degree in electrical engineering from the University of S\~{a}o Paulo, S\~{a}o Paulo, Brazil, in 2011 and 2015. In 2009, he was an undergraduate exchange student at the Masters in Acoustics of the \'{E}cole Centrale de Lyon, Lyon, France. He is currently working toward the Ph.D. degree in electrical and systems engineering at the University of Pennsylvania (Penn), Philadelphia. In 2009, he was an Assistant Instructor and Consultant on nondestructive testing at INSACAST Formation Continue. From 2010 to 2014, he worked as a Signal Processing and Statistical Consultant on a project with EMBRAER. His research interest include signal processing, optimization, statistics, and control.
\end{IEEEbiography}

\begin{IEEEbiography}[{\includegraphics[width=1in,height=1.25in]{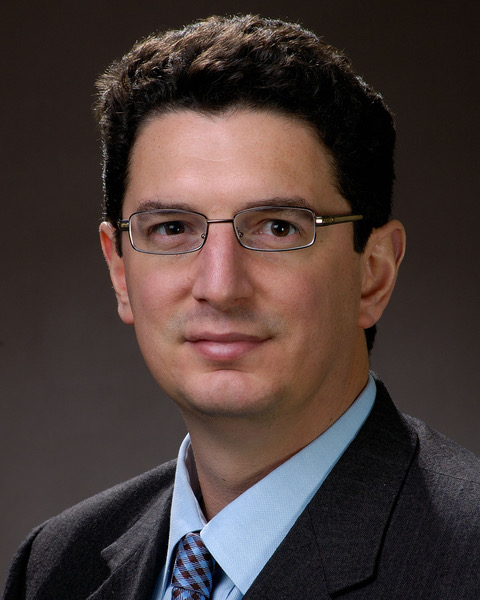}}]{George J. Pappas (F’09)}
received the Ph.D. degree in electrical engineering and computer sciences from the University of California, Berkeley, CA, USA, in  1998. He is currently the Joseph Moore Professor and Chair of the Department of Electrical and Systems Engineering, University of Pennsylvania, Philadelphia, PA, USA. He  also holds a secondary appointment with the Department of Computer and Information Sciences and the Department of Mechanical Engineering and Applied Mechanics. He is a Member of the GRASP Lab and the PRECISE Center. He had previously served as the Deputy Dean for Research with the School of Engineering and Applied Science. His research interests include control theory and, in particular, hybrid systems, embedded systems, cyberphysical systems, and hierarchical and distributed control systems, with applications to unmanned aerial vehicles, distributed robotics, green buildings, and biomolecular networks. Dr. Pappas has received various awards, such as the Antonio Ruberti Young Researcher Prize, the George S. Axelby Award, the Hugo Schuck Best Paper Award, the George H. Heilmeier Award, the National Science Foundation PECASE award and numerous best student papers awards at ACC, CDC, and ICCPS.

\end{IEEEbiography}

\begin{IEEEbiography}[{\includegraphics[width=1in,height=1.25in]{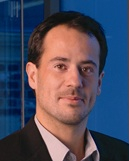}}]{Alejandro Ribeiro}
received the B.Sc. degree in electrical engineering from the Universidad de la Republica Oriental del Uruguay, Montevideo, in 1998 and the M.Sc. and Ph.D. degree in electrical engineering from the Department of Electrical and Computer Engineering, the University of Minnesota, Minneapolis in 2005 and 2007.

From 1998 to 2003, he was a member of the technical staff at Bellsouth Montevideo. After his M.Sc. and Ph.D studies, in 2008 he joined the University of Pennsylvania (Penn), Philadelphia, where he is currently the Rosenbluth Associate Professor at the Department of Electrical and Systems Engineering. His research interests are in the applications of statistical signal processing to the study of networks and networked phenomena. His focus is on structured representations of networked data structures, graph signal processing, network optimization, robot teams, and networked control.

Dr. Ribeiro received the 2014 O. Hugo Schuck best paper award, and paper awards at CDC 2017, 2016 SSP Workshop, 2016 SAM Workshop, 2015 Asilomar SSC Conference, ACC 2013, ICASSP 2006, and ICASSP 2005. His teaching has been recognized with the 2017 Lindback award for distinguished teaching and the 2012 S. Reid Warren, Jr. Award presented by Penn's undergraduate student body for outstanding teaching. Dr. Ribeiro is a Fulbright scholar class of 2003 and a Penn Fellow class of 2015.
\end{IEEEbiography}

\end{document}